\theoremstyle{plain}
\newtheorem{theorem}{Theorem}[section]
\newtheorem{lemma}[theorem]{Lemma}
\newtheorem{proposition}[theorem]{Proposition}
\theoremstyle{remark}
\newtheorem{remark}[theorem]{\bf Remark}
\def\QQ{\mathbb{Q}}
\begin{document}

	\selectlanguage{english}
	\title[The construction of the Hilbert genus fields...]{The construction of the Hilbert genus fields of real cyclic quartic fields}

	\author[M. M. Chems-Eddin]{M. M. Chems-Eddin}
	\address{Mohamed Mahmoud Chems-Eddin: Mohammed First University, Mathematics Department, Sciences Faculty, Oujda, Morocco }
	\email{2m.chemseddin@gmail.com}
	
	\author[M. A. Hajjami]{M. A. Hajjami} 
	\address{Moulay Ahmed Hajjami, Department of  Mathematics, Faculty of  Sciences and Technology, Moulay Ismail University of Meknes, Errachidia, Morocco.}
	\email{a.hajjami76@gmail.com}

	\author[M. Taous]{M. Taous}
	\address{Mohammed Taous, Department of  Mathematics, Faculty of  Sciences and Technology, Moulay Ismail University of Meknes, Errachidia, Morocco.}
	\email{taousm@hotmail.com}

	\keywords{Real cyclic quartic  fields, unramified extensions,  Hilbert genus fields.}
	\subjclass[2010]{11R16, 11R29, 11R27, 11R04, 11R37}
	
	\begin{abstract}
		Let $p$ be a prime number such that $p=2$ or $p\equiv 1\pmod 4$. Let $\varepsilon_p$ denote the  fundamental unit of  $\mathbb{Q}(\sqrt{p})$ and let $a$ be a positive square-free integer. In the present  paper, we construct the Hilbert genus field of the real cyclic quartic    fields  $\mathbb{Q}(\sqrt{a\varepsilon_p\sqrt{p}})$.
	\end{abstract}
	
	\selectlanguage{english}
	
	\maketitle
	
	\section{\bf Introduction}\label{sec:1}
	Let $k$ be a number field  and let     $H(k)$  denote  the Hilbert class field of $k$, that   is
	the maximal abelian unramified extension of $k$. It is known by class field theory that  the Galois group    of the extension  $H(k)/k$, i.e., $\mathrm{G}:=\mathrm{Gal}(H(k)/k)$,  is isomorphic to $\mathbf{C}l(k)$, the class group of $k$ (cf. \cite[p. 228]{ref12}).

	The       Hilbert genus field of $k$,   denoted by   $E(k)$,    is the invariant field
	of $\mathrm{G}^2$. Thus, by Galois theory, we  have:
	$$\mathbf{C}l(k)/\mathbf{C}l(k)^2  \simeq \mathrm{G}/\mathrm{G}^2 \simeq \mathrm{Gal}(E(k)/k),$$
	and therefore, $2$-rank $(\mathbf{C}l(k))$ = $2$-rank $(\mathrm{Gal}(E(k)/k))$. On the other hand, $E(k)/k$ is the maximal unramified Kummer extension of exponent $2$. Thus, by Kummer theory (cf. \cite[p. 14]{ref16}), there exists a unique multiplicative group $\Delta(k)$ such that
	$$
	E(k)=H(k)\cap k(\sqrt{k^*})= k(\sqrt{\Delta(k)})  \text{  and }  {k^*}^2 \subset {\Delta(k)} \subset k^*.
	$$
	Therefore, the construction of  the Hilbert genus field of $k$ is  equivalent  to give a set of generators for the finite group $\Delta(k)/k{^*}^2$.\\
	Note that many    mathematicians showed their interest to this problem by several  studies on many different number fields.  For instance,   Bae and Yue   studied the Hilbert genus field of the fields $\mathbb{Q}(\sqrt{p}, \sqrt{d})$, for 
	a prime number $p$ such that, $p=2$ or $p\equiv 1\pmod 4$, and    a positive square-free  integer $d$ (cf. \cite{ref2}).

	After that, Ouang and Zhang have determined the Hilbert genus field of the imaginary biquadratic fields $ \mathbb{Q}(\sqrt{\delta}, \sqrt{d})$, where $\delta =-1, -2$  or $-p$ with $p\equiv 3\pmod 4$ a prime number and $d$ any square-free integer. Thereafter,  they   constructed   the Hilbert genus field of real biquadratic fields $\mathbb{Q}(\sqrt{\delta}, \sqrt{d})$, 
	for   any  positive square-free integer $d$, and   $\delta=p, 2p$ or $p_1p_2$ where $p$, $p_1$ and $p_2$ are prime numbers congruent to $3\pmod 4$, such that the class number of $ \mathbb{Q}(\sqrt{\delta})$ is odd (cf.  \cite{ref17,ref18}). 
	Recently, in \cite{hajjaChems}, the first author named and the second author named constructed the Hilbert genus field    of the  imaginary cyclic quartic fields    $\mathbb{Q}(\sqrt{-d{\varepsilon}_p \sqrt{p}})$, where $p$ is a prime number   such that, $p=2$ or $p\equiv 1\pmod 4$, and  $d$ is   a positive square-free  integer.
	For more papers on this problem, we refer the reader to \cite{ref20,ref19,ref7}.
	
	The construction of  the Hilbert genus fields of  real cyclic quartic fields  demands particular preparations  and very long investigations, therefore, 
	in the present work,  we shall construct the Hilbert genus fields of  real cyclic quartic fields of the form  $K=\mathbb{Q}(\sqrt{a{\varepsilon}_p \sqrt{p}})$, for a prime number $p$ such that $p=2$ or $p\equiv 1\pmod 4$, and a positive square-free integer $a$ relatively prime to $p$.

	Note  therefore that   this paper completes the previous studies of  the construction of the Hilbert genus fields of all abelian quartic fields  that have quadratic subfields whose class numbers are odd.
	
	The plan of this paper is the following: In \S \ref{sec:2}, we shall collect some preliminary results that we shall use for the next. In \S \ref{sec:3}, we prove further preliminary results on Diophantine equations.
	In \S \ref{sec:4}, we  investigate   the genus fields of the real cyclic quartic fields $K$. In \S \ref{sec:5}, we shall construct the Hilbert genus fields of the fields $K$.

	\begin{center}
		{	\large \bf{Notations}}
	\end{center}
	Let $k$ be a number field. Throughout this paper, we shall respect the following notations:
	\begin{enumerate}[$\bullet$]
		\item $\mathbf{C}l(k)$: the class group of $k$,
		\item $\mathcal{O}_k$: the ring of integers of  $k$,
		\item $N_{k/k'}$:  the norm map of an extension $k/k'$,
		\item $E_k$: the unit group of $k$,
		\item $k^*$: the nonzero elements of  $k$,
		\item $k_{(*)}$: the  narrow genus field of  $k$,
		\item $k^{(*)}$: the   genus field of  $k$,
		\item $E(k) $:  the Hilbert genus field of $k$,
		\item $H(k) $:  the Hilbert  class field of $k$,
		\item $\delta_k$: the absolute discriminant of $k$,
		\item $\delta_{k/k'}$: the generator of the relative discriminant of an extension $k/k'$,
		\item $r_{2}(A)$:  the $2$-rank of a finite  abelian group $A$,
		\item  $\varepsilon_{d} $:  the fundamental unit of $\QQ(\sqrt{d})$, where $d$ is a positive square-free integer,
		\item $p$: a prime number such that $p=2$ or $p\equiv 1\pmod 4$,
		\item $a$: a positive square-free  integer relatively prime to $p$,
		\item $\delta=a\varepsilon_p \sqrt{p}$,
		\item $k_0=\mathbb{Q}(\sqrt{p})$,
		\item $K=k_0(\sqrt \delta)$: an imaginary quartic cyclic number field,
		\item $q_j$: an odd prime integer,
		\item $i_0$ and $j_0$: two positive integers defined in pages \pageref{io} and \pageref{j0} respectively, 
		\item $\left(\dfrac{\cdot}{\cdot}\right)$:  the Legendre symbol,
		\item $\left(\dfrac{\cdot}{\cdot}\right)_4$:   the rational biquadratic residue symbol.		
		\item  Let $A=\{a_i\}_{i\in I}$ be a finite set of complex numbers. To simply notations in some places throughout the paper we shall put $  k(\{a_i\}_{i\in I}):=k(a_1,...,a_s)$, i.e., the extension of $k$ generated by the elements of $A$.
	\end{enumerate}

	For further   notations see   the beginning of each section below.

	\section{\bf Preliminary results}\label{sec:2}
	In the present  section, we start our preparations by collecting some known results, of the theory of quartic cyclic fields and genus fields,  that we will need in what follows. 
	Let $L$ be a cyclic quartic extension of the rational number field $\mathbb{Q}$. It is known   that  $L$ can be expressed uniquely  in the form: 
	$L=\mathbb{Q}(\sqrt{a(d+b\sqrt{d})}),$
	for some integers $a$, $b$, $c$ and $d$ such that  $d=b^2+c^2$ is square-free with $b>0$ and $c>0$, and
	$a$ is an odd square-free integer relatively prime to $d$ (cf. \cite{ref6,ref21}).  Note that $L$ possesses a unique quadratic subfield $k=\mathbb{Q}(\sqrt{d})$. The following lemmas are used to compute ramifications indexes of prime numbers that ramify in $K$.

	\begin{lemma}[\cite{ref9}]\label{lemma 2.1; discriminants}
		Keep the above notations. We have:
		\begin{enumerate}[\rm 1.]
			\item  	The absolute discriminant of $L$ is given by $\delta_L$, where:
			$$
			\delta_L=\left\{\begin{array}{lll}
				2^8a^2d^3, & \text { if } & d \equiv 0\pmod 2, \\
				2^4a^2d^3, & \text { if } & d \equiv 1\pmod 4, b \equiv 0\pmod 2, a+b \equiv 3\pmod 4, \\ 
				a^2d^3,& \text { if } & d \equiv 1\pmod 4, b \equiv 0\pmod 2, a+b \equiv 1\pmod 4, \\ 
				2^6a^2d^3, & \text { if } & d \equiv 1\pmod 4, b \equiv 1\pmod 2.\end{array}\right.
			$$ 
			\item The relative discriminant of  $L/k$ is given by  $\Delta_{L/k}=\delta_{L/k}\mathcal{O}_k,$ where:
			$$
			\delta_{L/k}=\left\{\begin{array}{lll}
				4a \sqrt{d}, & \text { if } & d \equiv 0\pmod 2,  \\
				4a \sqrt{d}, & \text { if } & d \equiv 1\pmod 4, b \equiv 0\pmod 2, a+b \equiv 3\pmod 4, \\ 
				a\sqrt{d},& \text { if } & d \equiv 1\pmod 4, b \equiv 0\pmod 2, a+b \equiv 1\pmod 4, \\ 
				8a\sqrt{d}, & \text { if } & d \equiv 1\pmod 4, b \equiv 1\pmod 2.\end{array}\right.
			$$ 
		\end{enumerate}
	\end{lemma}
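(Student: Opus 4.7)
The plan is to compute the relative discriminant $\delta_{L/k}$ first, and then recover the absolute discriminant $\delta_L$ from the tower formula
$$\delta_L \;=\; N_{k/\QQ}(\delta_{L/k})\cdot \delta_k^{\,2},$$
using $\delta_k=d$ when $d\equiv 1\pmod 4$ and $\delta_k=4d$ when $d$ is even. A quick verification shows that the four expressions for $\delta_{L/k}$ in part 2 do produce, under this formula, the four expressions for $\delta_L$ in part 1, so the two halves of the lemma are equivalent; the real work is part 2.

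Writing $L=k(\sqrt\alpha)$ with $\alpha=a(d+b\sqrt d)$, I would start from the factorisation
$$\alpha \;=\; a\sqrt d\,(b+\sqrt d), \qquad (b+\sqrt d)(b-\sqrt d)=b^2-d=-c^2,$$
which shows that any prime of $k$ ramified in $L/k$ must lie above $2$, above an odd prime divisor of $a$, or above $\sqrt d$. For an odd prime $\mathfrak q$ of $k$ the extension $L=k(\sqrt\alpha)$ is tame and a standard Kummer computation gives $v_\mathfrak q(\delta_{L/k})=1$ exactly when $v_\mathfrak q(\alpha)$ is odd (after squarefree reduction). Because of the identity above, the primes above $c$ that do not divide $ad$ occur to even order in $(\alpha)$ and therefore contribute nothing; on the other hand every odd prime dividing $a$ or $d$ occurs to odd order and contributes its rational prime exactly once to $\delta_{L/k}$. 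This produces the "odd part" $a\sqrt d$ appearing in all four cases.

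The main obstacle is the $2$-part, which is genuinely wild and accounts for the case split. Here one has to work in the completion $k_\mathfrak p$ for each prime $\mathfrak p\mid 2$ of $k$ and decide, by testing elements of the form $\tfrac{u+v\sqrt\alpha}{2}$ for $u,v\in\mathcal O_k$, whether an integral generator exists that enlarges the naive order $\mathcal O_k[\sqrt\alpha]$ (of discriminant $4\alpha$). Concretely:
\begin{enumerate}[\rm (i)]
\item If $d\equiv 0\pmod 2$, then $2$ ramifies in $k$ as $(2)=\mathfrak p^2$ and no such improvement is possible, giving $v_\mathfrak p(\delta_{L/k})=4$ and hence a factor $4$.
\item If $d\equiv 1\pmod 4$ and $b$ is odd, then $d\equiv 1\pmod 8$ or $5\pmod 8$ must be examined; the element $d+b\sqrt d$ is an odd unit times $\sqrt d$, and an analysis of $(1+\sqrt\alpha)/2$ versus $(1+\sqrt\alpha)/4$ modulo primes above $2$ yields the factor $8$.
\item If $d\equiv 1\pmod 4$ and $b$ is even, both $d+b\sqrt d$ and $\alpha$ are odd algebraic integers of $k$, so the finer congruence $a+b\bmod 4$ decides whether $(1+\sqrt\alpha)/2$ is integral at the primes above $2$: the case $a+b\equiv 1\pmod 4$ makes it integral (no $2$-contribution, giving $\delta_{L/k}=a\sqrt d$), while $a+b\equiv 3\pmod 4$ does not (giving the factor $4$).
\end{enumerate}

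The delicate point, and the one where all the case work concentrates, is (iii): one must verify in each completion that the element $(1+\sqrt\alpha)/2$ satisfies a monic polynomial with coefficients in $\mathcal O_{k_\mathfrak p}$ precisely under the congruence $a+b\equiv 1\pmod 4$. Once these four local discriminant exponents at primes above $2$ are established, assembling them with the odd-part contribution $a\sqrt d$ yields the stated formulas for $\delta_{L/k}$, and the tower formula then gives $\delta_L$.
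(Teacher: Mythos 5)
The paper does not actually prove this lemma: it is imported verbatim from Huard--Spearman--Williams \cite{ref9}, where the formulas are obtained by exhibiting an explicit (relative) integral basis of $\mathcal{O}_L$ and computing its discriminant directly. Your plan --- compute $\delta_{L/k}$ prime by prime and recover $\delta_L$ from the tower formula $\delta_L=N_{k/\QQ}(\delta_{L/k})\,\delta_k^{2}$ --- is a legitimate alternative route, and the parts you actually carry out are sound: the consistency check between the two halves of the lemma is correct, and the treatment of the odd primes via $\alpha=a\sqrt d\,(b+\sqrt d)$ and $(b+\sqrt d)(b-\sqrt d)=-c^{2}$ does give the odd part $a\sqrt d$ (note only that $a$ need not be coprime to $c$, but the parity argument still yields $v_{\mathfrak q}(\alpha)$ odd exactly at the primes over $a$ and over $d$).

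The gap is that the entire content of the lemma --- the four-way case split --- resides in the exponents at the primes above $2$, and your items (i)--(iii) merely name the answers without deriving them. Phrases such as ``an analysis of $(1+\sqrt\alpha)/2$ versus $(1+\sqrt\alpha)/4$ yields the factor $8$'' and ``the case $a+b\equiv 1\pmod 4$ makes it integral'' are precisely the assertions that need proof; nothing in the proposal establishes them. To close (iii) one must determine the squares in $(\mathcal{O}_k/4\mathcal{O}_k)^{\times}$ (which depend on whether $2$ splits or is inert in $k$, i.e.\ on $d\bmod 8$) and verify that $a(d+b\sqrt d)$ is congruent to one of them modulo $4$ exactly when $a+b\equiv 1\pmod 4$; the subcase $b\equiv 2\pmod 4$ in particular is not a one-line check. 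In (ii) one must also compute $v_{\mathfrak p}(d+b\sqrt d)$ for $\mathfrak p\mid 2$ when $c$ is even, which forces a further separation of $d\equiv 1$ from $d\equiv 5\pmod 8$. And in (i) the asserted exponent $v_{\mathfrak p}(\delta_{L/k})=4$ is off by the contribution of $\sqrt d$ (the stated formula $4a\sqrt d$ gives $v_{\mathfrak p}=5$ at the ramified prime $\mathfrak p\mid 2$), while the claim that ``no improvement is possible'' is exactly the wild-ramification computation being skipped. In short, the architecture is right and the reduction steps are correct, but the lemma itself has not been proved.
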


	\begin{lemma}[\cite{ref9}]\label{lm 2.2}
		Keep the above notations. If the class number of $k=\mathbb{Q}(\sqrt{d})$ is odd, then $L=\mathbb{Q}(\sqrt{a' \varepsilon_d\sqrt{d}}),$ where 
		$$
		a'=
		\begin{cases} 2a, &\text{ if } d\equiv 1\pmod 4 \text{ and } b\equiv 1\pmod 2,\\
			a, &\text{otherwise}.\end{cases}
		$$
	\end{lemma}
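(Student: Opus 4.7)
Work over $k = \mathbb{Q}(\sqrt d)$. Since $\sqrt d \in k^*$, we have
\[
a(d + b\sqrt d) = a\sqrt d \cdot (b + \sqrt d),
\]
so the task reduces to expressing $b + \sqrt d$ modulo $(k^*)^2$ in terms of $\varepsilon_d$. I would first compute $N_{k/\mathbb{Q}}(b+\sqrt d) = b^2 - d = -c^2$. Since $d = b^2 + c^2$ is square-free, one checks $\gcd(b, c) = 1$ and $\gcd(b, d) = 1$, which in $\mathcal O_k$ gives $(b, \sqrt d) = (1)$. Combined with the identities $(b+\sqrt d) \pm (b-\sqrt d) \in \{2b,\, 2\sqrt d\}$, this shows that the ideal $\mathfrak g := \big((b+\sqrt d), (b-\sqrt d)\big)$ divides $(2)$.

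Next I would split into the three cases matching the statement and show that (a suitable multiple of) $(b + \sqrt d)$ is the square of an ideal. When $d$ is even, $b$ and $c$ are odd, and $b + \sqrt d$ is a unit modulo the ramified prime over $2$; so $\mathfrak g = (1)$ and $(b+\sqrt d) = \mathfrak a^2$ with $N\mathfrak a = |c|$. When $d \equiv 1 \pmod 4$ with $b$ even, $c$ is odd, $b + \sqrt d \equiv \sqrt d$ is a unit mod $2\mathcal O_k$, so again $\mathfrak g = (1)$ and $(b+\sqrt d) = \mathfrak a^2$. When $d \equiv 1 \pmod 4$ with $b$ odd, $c$ is even and both $b \pm \sqrt d$ lie in $2\mathcal O_k$; the element $\gamma := (b + \sqrt d)/2 = (b-1)/2 + (1+\sqrt d)/2$ belongs to $\mathcal O_k$ and satisfies $\gamma \bar \gamma = -(c/2)^2$, while $\gamma + \bar\gamma = b$ and $\gamma - \bar\gamma = \sqrt d$ force $((\gamma), (\bar\gamma)) = (1)$, so $(\gamma) = \mathfrak a^2$ with $N\mathfrak a = |c/2|$.

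Since $h(k)$ is odd, each $\mathfrak a^2$ comes from a principal square, $\mathfrak a = (\alpha)$, and therefore $b + \sqrt d = u\alpha^2$ in the first two cases, or $\gamma = u\alpha^2$ in the third, for some unit $u \in E_k$. Comparing norms gives $N(u) = -1$, which in particular forces $N(\varepsilon_d) = -1$ and $u = \pm \varepsilon_d^{2m+1}$ for some $m \in \mathbb{Z}$. A signature check pins down the sign: under the two real embeddings of $k$, both $b + \sqrt d$ (resp.\ $\gamma$) and $\varepsilon_d$ are positive with negative Galois conjugate (for $b+\sqrt d$ because $\sqrt d > b > 0$; for $\varepsilon_d$ because $\bar\varepsilon_d = -\varepsilon_d^{-1}$), while $\alpha^2$ is totally positive. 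Hence $u = +\varepsilon_d^{2m+1} \equiv \varepsilon_d \pmod{(k^*)^2}$.

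Putting everything together, $b + \sqrt d \equiv \varepsilon_d$ in the first two cases and $b + \sqrt d = 2\gamma \equiv 2\varepsilon_d$ in the third, so
\[
a(d + b\sqrt d) = a\sqrt d\,(b+\sqrt d) \equiv a'\,\varepsilon_d \sqrt d \pmod{(k^*)^2}
\]
with $a'$ as in the statement, and taking square roots gives $L = \mathbb{Q}(\sqrt{a'\,\varepsilon_d\sqrt d})$. The main obstacle I expect is the $2$-adic analysis in Step~2: verifying that the conjugate ideals $(b+\sqrt d)$ and $(b-\sqrt d)$ (or $(\gamma)$ and $(\bar\gamma)$) are coprime in each parity case — the three sub-cases behave differently at $2$ and must be handled separately — since the rest of the argument is a fairly mechanical descent via odd class number plus a signature comparison.
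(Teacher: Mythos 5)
Your argument is correct, and in fact it supplies a proof where the paper gives none: Lemma~\ref{lm 2.2} is quoted from Huard, Spearman and Williams \cite{ref9} with only a citation, so there is no in-text argument to compare against. Your route is the natural one and every step checks out. Writing $a(d+b\sqrt d)=a\sqrt d\,(b+\sqrt d)$ reduces the problem to identifying $b+\sqrt d$ modulo $(k^*)^2$; square-freeness of $d=b^2+c^2$ gives $\gcd(b,c)=\gcd(b,d)=\gcd(c,d)=1$, hence $\bigl((b+\sqrt d),(b-\sqrt d)\bigr)$ divides $(2)$, and your three-way parity split handles the dyadic place (in the first two cases the odd norm $-c^2$ already makes the factors coprime; in the case $d\equiv 1\pmod 4$, $b\equiv 1\pmod 2$ one passes to $\gamma=(b+\sqrt d)/2\in\mathcal O_k$, which is the source of the extra factor $2$ in $a'$). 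Note that two coprime ideals whose product is $(c)^2$ (resp. $(c/2)^2$) are automatically squares of ideals, so you do not even need to analyse the splitting of the primes dividing $c$. Odd class number then descends $\mathfrak a^2$ to a principal square, giving $b+\sqrt d=u\alpha^2$ (resp. $\gamma=u\alpha^2$); the negative norm forces $N(u)=N(\varepsilon_d)=-1$ (recovering, as a by-product, the classical fact that $N(\varepsilon_d)=-1$ here), and the signature comparison — $b+\sqrt d$ and $\varepsilon_d$ both of signature $(+,-)$ because $\sqrt{b^2+c^2}>b$ and $\bar\varepsilon_d=-\varepsilon_d^{-1}$, while $\alpha^2$ is totally positive — excludes $u=-\varepsilon_d^{2m+1}$ and yields $u\equiv\varepsilon_d\pmod{(k^*)^2}$. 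This gives $a(d+b\sqrt d)\equiv a'\varepsilon_d\sqrt d\pmod{(k^*)^2}$ with $a'$ exactly as in the statement, whence $L=k\bigl(\sqrt{a(d+b\sqrt d)}\bigr)=\mathbb{Q}\bigl(\sqrt{a'\varepsilon_d\sqrt d}\bigr)$. I see no gap; only the routine coprimality verifications announced in your final paragraph remain to be written out.
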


	In \cite{frohlich}, Fr\"ohlich defined the genus field of a   number
	field $k$ as the maximal extension of $k$
	which is unramified at all finite primes
	of $k$ of the form $kk_1$,  where $k_1$ is an abelian extension of $\QQ$. Here, this field is called the narrow genus field of $k$ and it is
	denoted   by $k_{(*)}$. The genus field  of $k$ (denoted by $k^{(*)}$) is defined as the maximal
	extension of $k$ which is unramified at all finite and infinite primes of $k$ of the
	form $kk_1$, where $k_1$ is an abelian extension of $\QQ$. Note that if $k$ is an  abelian number field, then $k^{(*)}=k_{(*)}$ if $k$ is imaginary and $ k^{(*)}$ is the maximal real sub-extension of $k_{(*)}$ (i.e. $k_{(*)}^+$)  if $k$ is real.	We have:

	\begin{proposition}[\cite{ref11}]\label{prop 2.4}
		Let $L$ be an abelian extension of $\mathbb{Q}$ of degree $n$. 
		If $n=r^s$, where $r$ is a prime number and $s$ is a positive integer, then 
		$$L_{(*)} =\left(\prod_{p/\delta_L, p \neq r }M_p\right)L,$$
		where $M_p$ is the unique subfield of degree $e_p$ (the ramification index of $p$ in $L$) over $\mathbb{Q}$ of $\mathbb{Q}(\zeta_p)$ the $p$-th cyclotomic field and $\delta_L$ is the discriminant of $L$. 
	\end{proposition}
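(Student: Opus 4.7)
The plan is to prove the equality by two inclusions. Set $F:=\left(\prod_{p\mid\delta_L,\,p\neq r}M_p\right)L$; since $F/\mathbb{Q}$ is a compositum of abelian extensions, it is abelian, so to conclude $F\subseteq L_{(*)}$ it suffices to check that $F/L$ is unramified at every finite prime. At a prime $q\nmid\delta_L$ not appearing in the product, both $L$ and each $M_p$ are unramified at $q$, so there is nothing to verify. At $q=r$, each $M_p$ in the product satisfies $p\neq r$ and $M_p\subseteq\mathbb{Q}(\zeta_p)$, whose only rational ramified prime is $p$; hence $\prod M_p$ is unramified at $r$, and since ramification is preserved under unramified compositum, $F/L$ is unramified at $r$.

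The substantive case is $p\mid\delta_L$ with $p\neq r$, where the other $M_{p'}$'s are unramified at $p$, so it suffices to prove $LM_p/L$ is unramified at $p$. The degree $[LM_p:\mathbb{Q}]$ divides $r^s\,e_p$ with $e_p\mid p-1$, so both factors are coprime to $p$: the ramification at $p$ in $LM_p/\mathbb{Q}$ is tame and its inertia group $I$ is cyclic. The natural injection $\mathrm{Gal}(LM_p/\mathbb{Q})\hookrightarrow\mathrm{Gal}(L/\mathbb{Q})\times\mathrm{Gal}(M_p/\mathbb{Q})$ restricts to $I\hookrightarrow I_p(L/\mathbb{Q})\times\mathrm{Gal}(M_p/\mathbb{Q})$, with each projection surjective onto a cyclic group of order $e_p$ (using that $M_p$ is totally ramified at $p$). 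A cyclic subgroup of a product of two cyclic groups of order $e_p$ has order at most $e_p$; together with the surjection onto $I_p(L/\mathbb{Q})$, this forces $|I|=e_p$ and the projection $I\to I_p(L/\mathbb{Q})$ to be an isomorphism. Equivalently, $I\cap\mathrm{Gal}(LM_p/L)=1$, and $LM_p/L$ is unramified at $p$.

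For the reverse inclusion $L_{(*)}\subseteq F$, take any $F'=Lk_1\subseteq L_{(*)}$ with $k_1/\mathbb{Q}$ abelian; by Kronecker--Weber, $k_1\subseteq\mathbb{Q}(\zeta_m)$ for some $m$. A prime $q\nmid\delta_L$ cannot ramify in $k_1$, for otherwise the full inertia at $q$ in $Lk_1/\mathbb{Q}$ would lie in $\mathrm{Gal}(Lk_1/L)$ and contradict the unramifiedness of $F'/L$ at $q$. For $p\mid\delta_L$ with $p\neq r$, the $p$-primary part of $k_1$ lies in $\mathbb{Q}(\zeta_{p^a})$ for some $a$; the unramifiedness of $F'/L$ at $p$ first rules out wild ramification (forcing this part into the tame subfield $\mathbb{Q}(\zeta_p)$), and the cyclic-inertia analysis above then identifies the maximal admissible piece as the unique subfield of $\mathbb{Q}(\zeta_p)$ of degree $e_p$, namely $M_p$. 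At $p=r$ (if $r\mid\delta_L$), a parallel analysis matching the higher ramification filtration of $L$ at $r$ with that of cyclotomic $r$-subfields shows the $r$-primary part of $k_1$ is already contained in $L$. Combining prime-by-prime yields $k_1\subseteq L\prod_{p\mid\delta_L,\,p\neq r}M_p$.

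The main obstacle is the reverse inclusion at $p=r$: wild ramification prevents direct application of the tame cyclic-inertia trick, and one must carefully compare the higher ramification groups of $L$ at $r$ with those of the cyclotomic $r$-tower. This is the technical heart of Ishida's proof in \cite{ref11}.
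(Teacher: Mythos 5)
First, a caveat on the comparison itself: the paper does not prove Proposition \ref{prop 2.4} --- it is quoted verbatim from Ishida's monograph \cite{ref11} --- so there is no internal argument to measure yours against. Judged on its own, your forward inclusion is correct and complete: for $p\mid\delta_L$, $p\neq r$, the inertia group $I$ of $p$ in $LM_p/\mathbb{Q}$ is cyclic (tame, since $e_p\mid r^s$ and $e_p\mid p-1$ are prime to $p$), embeds in $I_p(L/\mathbb{Q})\times\mathrm{Gal}(M_p/\mathbb{Q})$, a group of exponent $e_p$, and surjects onto $I_p(L/\mathbb{Q})$ of order $e_p$; hence $I\cap\mathrm{Gal}(LM_p/L)=1$. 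I would accept that half as written.

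The reverse inclusion, however, contains a genuine gap at the prime $r$, and the intermediate claim you assert there is actually false. Take $L=\mathbb{Q}(\sqrt{-q})$ with $q\equiv 1\pmod 4$, so $r=2$, and $k_1=\mathbb{Q}(i)$: then $Lk_1=\mathbb{Q}(i,\sqrt{q})$ is unramified over $L$ at every finite prime, yet the $2$-primary part of $k_1$ is $\mathbb{Q}(i)$, which is not contained in $L$. What is true, and what you need, is only that this part lands inside $L\prod M_p$ --- and establishing that does not require the higher-ramification comparison you defer to Ishida for. Concretely: put $N=Lk_1\subseteq\mathbb{Q}(\zeta_f)$ and use that $\mathrm{Gal}(\mathbb{Q}(\zeta_f)/\mathbb{Q})$ is the direct product of the inertia factors $G_\ell=(\mathbb{Z}/\ell^{a_\ell}\mathbb{Z})^*$. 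Since $U=\mathrm{Gal}(\mathbb{Q}(\zeta_f)/N)$ contains the product of the subgroups $U\cap G_\ell$, one gets $N\subseteq\bigl(\prod_{p\neq r}M_p\bigr)R$, where each factor at $p\neq r$ is forced into $\mathbb{Q}(\zeta_p)$ by tameness and equals $M_p$ because $|I_p(N/\mathbb{Q})|=e_p$, and where $R\subseteq\mathbb{Q}(\zeta_{r^{a_r}})$ has degree $[R:\mathbb{Q}]=e_r(N)=e_r(L)$ precisely because $N/L$ is unramified at $r$. Then $I_r\bigl((\prod M_p)R/\mathbb{Q}\bigr)=\mathrm{Gal}\bigl((\prod M_p)R/\prod M_p\bigr)$ has order $e_r$ and restricts onto $I_r(L/\mathbb{Q})$, also of order $e_r$; the restriction is therefore an isomorphism, which says exactly that $(\prod M_p)R=(\prod M_p)L$ and hence $N\subseteq F$. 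Note that this direct-product decomposition is also what makes your ``combining prime-by-prime'' step rigorous; as written, that step is only asserted, and knowing the inertia of each prime in $k_1$ separately does not by itself pin down $k_1$ inside a specific field.
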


	\begin{proposition}[\cite{ref15}, p. 160]\label{prop 2.5}
		If $p$ is a prime number such that $p\equiv 1\pmod 4$, then $\mathbb{Q}(\sqrt{\varepsilon_p^*\sqrt{p}})$ is the quartic subfield of $\mathbb{Q}(\zeta_p)$, where
		$\varepsilon_p^*=\left(\frac{2}{p}\right)\varepsilon_p$ and $\mathbb{Q}(\sqrt{2+\sqrt{2}})$ is the real quartic field of $\mathbb{Q}(\zeta_{16})$.	 
	\end{proposition}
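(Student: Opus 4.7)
The plan is to treat the two cases of the statement separately. For $p=2$, I would use that $\mathbb{Q}(\zeta_{16})$ has degree $\varphi(16)=8$, so its maximal real subfield $\mathbb{Q}(\zeta_{16})^+ = \mathbb{Q}(\zeta_{16}+\zeta_{16}^{-1})$ is a quartic extension of $\mathbb{Q}$. Since $\zeta_{16}+\zeta_{16}^{-1}=2\cos(\pi/8)$, the half-angle identity applied to $\cos(\pi/4)=\sqrt{2}/2$ yields $2\cos(\pi/8)=\sqrt{2+\sqrt{2}}$, so $\sqrt{2+\sqrt{2}}\in\mathbb{Q}(\zeta_{16})^+$. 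The minimal polynomial $X^4-4X^2+2$ of $\sqrt{2+\sqrt{2}}$ is Eisenstein at $2$ and hence irreducible, so $[\mathbb{Q}(\sqrt{2+\sqrt{2}}):\mathbb{Q}]=4=[\mathbb{Q}(\zeta_{16})^+:\mathbb{Q}]$, which forces the desired equality.

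For the odd prime case, I would first locate the quartic subfield abstractly. Since $\mathrm{Gal}(\mathbb{Q}(\zeta_p)/\mathbb{Q})\simeq(\mathbb{Z}/p\mathbb{Z})^*$ is cyclic of order $p-1$ and $4\mid p-1$, there exists a unique subfield $F\subset\mathbb{Q}(\zeta_p)$ of degree $4$. The classical Gauss sum evaluation for $p\equiv 1\pmod 4$ gives the unique quadratic subfield $\mathbb{Q}(\sqrt{p})$, so $\mathbb{Q}(\sqrt p)\subset F$ and $F/\mathbb{Q}$ is cyclic quartic. Complex conjugation corresponds to the element $-1\in(\mathbb{Z}/p\mathbb{Z})^*$; a short computation in the cyclic group shows that $F$ is fixed by this element precisely when $p\equiv 1\pmod 8$, equivalently when $\left(\frac{2}{p}\right)=1$. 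Thus the real/complex nature of $F$ is governed exactly by the sign of $\varepsilon_p^*$.

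It remains to identify the generator of $F$. Genus theory ensures that $h(\mathbb{Q}(\sqrt{p}))$ is odd when $p$ is prime with $p\equiv 1\pmod 4$, so Lemma \ref{lm 2.2} applies and produces an expression $F=\mathbb{Q}(\sqrt{a'\varepsilon_p\sqrt{p}})$ for some square-free integer $a'$ coprime to $p$ (up to a sign). Because $F\subset\mathbb{Q}(\zeta_p)$, the only prime that ramifies in $F$ is $p$, so $\delta_F$ must be a pure power of $p$. Inspecting Lemma \ref{lemma 2.1; discriminants} rules out every case except the third one (giving $\delta_F=p^3$), and this pins down $|a'|=1$; the remaining sign ambiguity is then settled by comparing with paragraph two, forcing $a'=\left(\frac{2}{p}\right)$, hence $F=\mathbb{Q}(\sqrt{\varepsilon_p^*\sqrt{p}})$. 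The main obstacle I anticipate is the last sign reconciliation: one must verify carefully that the same congruence of $p$ modulo $8$ simultaneously controls $\left(\frac{2}{p}\right)$, the sign of $\varepsilon_p^*\sqrt{p}$, and the real-versus-complex character of the quartic subfield $F$, so that the two independent determinations of $a'$ agree.
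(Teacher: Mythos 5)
Your proposal cannot be matched against the paper's own argument because the paper gives none: Proposition \ref{prop 2.5} is simply quoted from Lemmermeyer (the citation to \cite{ref15}, p.\ 160). Your proof is a correct, self-contained derivation, and it has the pleasant feature of using only facts the paper already sets up. The $p=2$ half (half-angle formula plus Eisenstein at $2$) is complete. In the odd case your three steps all check out: uniqueness and cyclicity of the quartic subfield $F$ follow from $(\mathbb{Z}/p\mathbb{Z})^*$ being cyclic; $F$ is real iff $-1$ lies in the index-$4$ subgroup of fourth powers, i.e.\ iff $8\mid p-1$, which for $p\equiv 1\pmod 4$ is exactly $\left(\frac{2}{p}\right)=1$; and since $F\subset\mathbb{Q}(\zeta_p)$ is unramified outside $p$, only the third line of Lemma \ref{lemma 2.1; discriminants} (with $a=\pm1$, $d=p$) is possible, so Lemma \ref{lm 2.2} gives $F=\mathbb{Q}(\sqrt{a'\varepsilon_p\sqrt p})$ with $a'=a=\pm1$ — note you should say explicitly that you are in the $b\equiv 0\pmod 2$ branch of Lemma \ref{lm 2.2}, so $a'=a$ and no factor $2$ appears. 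The sign reconciliation you worry about is genuinely fine: $\varepsilon_p>1$, so $\mathbb{Q}(\sqrt{\varepsilon_p\sqrt p})$ is real and $\mathbb{Q}(\sqrt{-\varepsilon_p\sqrt p})$ is imaginary, and matching with the reality criterion forces $a'=\left(\frac{2}{p}\right)$. As a cross-check you could also read the sign off the congruence $a+b\equiv 1\pmod 4$ in the third discriminant case: writing $p=b^2+c^2$ with $b$ even, one has $4\mid b$ iff $p\equiv 1\pmod 8$, which forces $a=1$ there and $a=-1$ when $p\equiv 5\pmod 8$ — the same answer, obtained without invoking complex conjugation.
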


	We close this section with the following two results.

	\begin{lemma}\label{lemma 4.1}
		Let $k/k'$ be a quadratic extension of number fields such that the class number of $k'$ is odd.   Let $\Delta(k)$ denote the multiplicative group  such that ${k^*}^2 \subset {\Delta(k)} \subset k^*$ and $k(\sqrt{\Delta(k)})$ is the Hilbert genus field of $k$  $($\textnormal{cf}.  \S \ref{sec:1}$)$. Then 
		$$r_2(\Delta(k)/{k^*}^2)=r_2({\mathbf{C}l(k)})=t-e-1,$$
		where  $t$ is the number of  ramified primes (finite or infinite) in the extension  $k/k'$ and $e$ is  defined by   $2^{e}=[E_{k'}:E_{k'} \cap N_{k/k'}(k^*)]$.
	\end{lemma}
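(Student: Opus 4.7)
\textbf{The plan} is to establish the two equalities of the lemma separately, by standard genus-theoretic arguments.

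The first equality, $r_{2}(\Delta(k)/(k^{*})^{2})=r_{2}(\mathbf{C}l(k))$, is essentially built into the framework recalled in the introduction. By Kummer theory, since $E(k)=k(\sqrt{\Delta(k)})$ is an elementary abelian $2$-extension of $k$, one has $[E(k):k]=|\Delta(k)/(k^{*})^{2}|$, while class field theory provides the isomorphism $\mathrm{Gal}(E(k)/k)\simeq\mathbf{C}l(k)/\mathbf{C}l(k)^{2}$. Both groups involved are elementary abelian $2$-groups of equal cardinality, so their $2$-ranks coincide.

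For the second equality, I would apply Chevalley's ambiguous class number formula to the quadratic extension $k/k'$ with $G=\mathrm{Gal}(k/k')=\langle\sigma\rangle$. It reads
$$|\mathbf{C}l(k)^{G}|=h(k')\cdot\frac{\prod_{v}e_{v}}{[k:k']\,[E_{k'}:E_{k'}\cap N_{k/k'}(k^{*})]}.$$
Substituting $[k:k']=2$, $\prod_{v}e_{v}=2^{t}$ (each ramified finite or infinite prime contributing a factor $2$), and $[E_{k'}:E_{k'}\cap N_{k/k'}(k^{*})]=2^{e}$ gives
$$|\mathbf{C}l(k)^{G}|=h(k')\cdot 2^{\,t-e-1}.$$

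It remains to convert this count into a $2$-rank of $\mathbf{C}l(k)$, and this is where the odd class number assumption on $k'$ becomes crucial. Let $C$ denote the $2$-Sylow subgroup of $\mathbf{C}l(k)$. Since $N_{k/k'}(C)$ is a $2$-subgroup of $\mathbf{C}l(k')$, which has odd order, it must be trivial. Using the identity $\iota\circ N_{k/k'}=1+\sigma$ on $\mathbf{C}l(k)$, one concludes $(1+\sigma)(C)=0$, i.e.\ $\sigma$ acts on $C$ by inversion. Therefore $C^{G}=C[2]$. Decomposing $\mathbf{C}l(k)$ into its $2$-Sylow and its odd part, one has $|C^{G}|=2^{t-e-1}$ (the $2$-part of $|\mathbf{C}l(k)^{G}|$), hence
$$r_{2}(\mathbf{C}l(k))=r_{2}(C)=\log_{2}|C[2]|=t-e-1.$$
The only step requiring genuine care is identifying $C^{G}$ with $C[2]$, which rests on the odd-class-number hypothesis on $k'$; everything else is a direct invocation of Kummer theory and Chevalley's formula.
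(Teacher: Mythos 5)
Your proof is correct and follows essentially the same route as the paper, which simply cites the ambiguous class number formula of Chevalley--Gras together with a lemma from the authors' earlier work; your write-up supplies exactly the details behind those citations (Kummer theory for the first equality, the formula $|\mathbf{C}l(k)^{G}|=h(k')2^{t-e-1}$, and the identification $C^{G}=C[2]$ via $\sigma$ acting by inversion on the $2$-Sylow subgroup when $h(k')$ is odd). Nothing further is needed.
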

	\begin{proof} This follows by well known ambiguous class number formula (cf.  \cite{Gr}) and Lemma \cite[Lamma 4.1]{hajjaChems}.
	\end{proof}

	\begin{proposition}[\cite{ref8}]\label{unramifed quad ex}
		Let $k/k'$ be a quadratic extension of number fields  and $\mu$ a number of $k'$, coprime with $2$, such that $k=k'(\sqrt{\alpha})$. The extension $k/k'$ is unramified  at all finite primes of $k'$ if  and only  if     the two following items  hold:
		\begin{enumerate}[\indent\rm 1.]
			\item the ideal generated by $\alpha$ is the square of the fractional ideal of $k'$, and 
			\item there exists a nonzero number $\xi$ of $k'$ verifying $\alpha \equiv \xi^2\pmod{4}$.
		\end{enumerate}
	\end{proposition}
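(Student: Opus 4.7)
The plan is to prove both implications by working locally at each finite prime $\mathfrak{p}$ of $k'$ and analysing the quadratic extension $k'_{\mathfrak{p}}(\sqrt{\alpha})/k'_{\mathfrak{p}}$, splitting cases according to whether $\mathfrak{p}$ lies above $2$ or not. The coprimality hypothesis on $\alpha$ normalises its square class so that at every dyadic prime $\alpha$ is a local unit, which makes a congruence modulo $4$ intrinsic.

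At an odd prime $\mathfrak{p}$, I would write $\alpha = u\pi^n$ with $u$ a unit and $\pi$ a uniformiser. The extension $k'_{\mathfrak{p}}(\sqrt{u\pi^n})$ is unramified precisely when $n$ is even: if $n$ is odd the polynomial $X^2-u\pi^n$ is Eisenstein up to an obvious substitution and the extension is totally ramified, while if $n$ is even one absorbs $\pi^{n/2}$ and reduces to $k'_{\mathfrak{p}}(\sqrt{u})$, which is unramified because $2$ is a unit and so the discriminant $4u$ of $X^2 - u$ is a unit. Collecting this criterion over all odd primes shows that unramifiedness away from $2$ is equivalent to $(\alpha)$ being the square of a fractional ideal of $k'$, which is exactly condition (1).

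At a dyadic prime $\mathfrak{p}$, the coprimality hypothesis makes $\alpha$ a local unit, so condition (1) is automatic there. I would then invoke the standard local criterion: the unique unramified quadratic extension of $k'_{\mathfrak{p}}$ is $k'_{\mathfrak{p}}(\sqrt{\beta})$ for a unit $\beta$ if and only if there exists $\eta_{\mathfrak{p}}\in\mathcal{O}_{k'_{\mathfrak{p}}}$ with $\beta\equiv\eta_{\mathfrak{p}}^{2}\pmod{4}$; in that case $(\eta_{\mathfrak{p}}+\sqrt{\beta})/2$ is an algebraic integer whose minimal polynomial has unit discriminant. Thus one direction follows by taking $\eta_{\mathfrak{p}}$ to be the image at $\mathfrak{p}$ of a global $\xi$ satisfying (2); for the converse, given local solutions $\eta_{\mathfrak{p}}$ at each $\mathfrak{p}\mid 2$, one patches them by the Chinese Remainder Theorem in $\mathcal{O}_{k'}/4\mathcal{O}_{k'}$ into a single global $\xi\in\mathcal{O}_{k'}$ satisfying $\alpha\equiv\xi^{2}\pmod{4}$.

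I expect the main obstacle to be the dyadic-prime analysis, specifically justifying the local criterion in the ``if'' direction. The challenge is to show, from the single congruence $\alpha\equiv\xi^{2}\pmod{4}$, that the extension is genuinely unramified: one must compute the different of $\mathcal{O}_{k'_{\mathfrak{p}}}[(\xi+\sqrt{\alpha})/2]/\mathcal{O}_{k'_{\mathfrak{p}}}$ and verify that this order is already the full ring of integers of $k'_{\mathfrak{p}}(\sqrt{\alpha})$, rather than a proper suborder whose conductor could hide ramification. Once this different computation is in place, the two-way equivalence follows cleanly, and together with the odd-prime argument it yields the stated global criterion.
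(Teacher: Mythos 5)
The paper gives no proof of this proposition at all: it is quoted, with attribution, from Hilbert's memoir on relative quadratic fields (\cite{ref8}), so there is no in-paper argument to compare yours against. Your local--global outline is the standard modern proof and its structure is sound: ramification is a local matter; at an odd prime the analysis of $\alpha=u\pi^{n}$ correctly shows that unramifiedness there is equivalent to $v_{\mathfrak{p}}(\alpha)$ being even, which together with the hypothesis that $\alpha$ is prime to $2$ (so that the dyadic valuations vanish) is exactly condition (1); and the Chinese Remainder Theorem in $\mathcal{O}_{k'}/4\mathcal{O}_{k'}\simeq\prod_{\mathfrak{p}\mid 2}\mathcal{O}_{k'}/\mathfrak{p}^{2e_{\mathfrak{p}}}$ correctly globalises the dyadic local congruences into condition (2), the resulting $\xi$ being a unit at each dyadic prime and hence nonzero.

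The one substantive criticism is that you have placed the weight on the wrong half of the dyadic criterion. The direction you flag as the main obstacle --- congruence implies unramified --- is in fact a one-line computation: $\theta=(\xi+\sqrt{\alpha})/2$ is a root of the monic integral polynomial $X^{2}-\xi X+(\xi^{2}-\alpha)/4$, whose discriminant is $\xi^{2}-(\xi^{2}-\alpha)=\alpha$, a dyadic unit; since $\mathrm{disc}(\mathcal{O}[\theta])=[\mathcal{O}_{L}:\mathcal{O}[\theta]]^{2}\,\mathrm{disc}(\mathcal{O}_{L})$, the discriminant of the maximal order is forced to be a unit and there is no need to decide whether $\mathcal{O}[\theta]$ is maximal or to worry about a hidden conductor. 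By contrast, the converse --- that an unramified (or trivial) dyadic extension $k'_{\mathfrak{p}}(\sqrt{\alpha})/k'_{\mathfrak{p}}$ generated by a unit forces solvability of $x^{2}\equiv\alpha\pmod{4}$ --- is the direction with real content, and your proposal disposes of it by citing ``the standard local criterion'' and then claiming the equivalence ``follows cleanly.'' To close this you need an actual argument: for instance, use that the residue field is perfect of characteristic $2$ to find $\eta_{0}\in\mathcal{O}_{k'_{\mathfrak{p}}}$ with $\alpha\equiv\eta_{0}^{2}\pmod{\mathfrak{p}}$, then lift the approximation step by step up to $\mathfrak{p}^{2e_{\mathfrak{p}}}$, using unramifiedness to solve the Artin--Schreier-type obstruction at each level; alternatively, compute the different of the maximal order directly. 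As written, that converse is asserted rather than proved, and it is the only genuine gap in the proposal.
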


	Let us close this section by recalling the definition and some properties of  the rational biquadratic residue symbol. 
	Let $p$ be a prime number and  $a\in \mathbb{Z}\backslash \mathbb{Z}$ a quadratic residue modulo $p$. Then  we define  the \textit{rational biquadratic residue symbole modulo }$p$ by 
	\begin{eqnarray*}
		\genfrac(){}{0}{a}{p}_4=\left\{  \begin{array}{ll}
			1& \text{ if } a \text{ is a biquadratic residue modulo} p\\
			-1& \text{ otherwise.}
		\end{array} \right.
	\end{eqnarray*}
	We have :\\
	$\noindent$ $\bullet$ If $p\equiv 1\pmod 4$, then 
	$$\genfrac(){}{0}{a}{p}_4=\pm1\equiv a^{\frac{p-1}{4}}\pmod p.$$

	$\noindent$ $\bullet$ If now $a$ is an integer $a\equiv 1\pmod 8$, the symbol $\genfrac(){}{0}{a}{2}_4$ is defined by
	$$\genfrac(){}{0}{a}{2}_4=1 \text{ if }  a\equiv 1\pmod{16},\;\;\;\;\;\;\;\;\; \genfrac(){}{0}{a}{2}_4=-1 \text{ if }  a\equiv 9\pmod{16}.$$
	
	It turns out that   $\genfrac(){}{0}{a}{2}_4=(-1)^{\frac{a-1}{8}}$. For more details, we refer the reader to \cite[Chapter 7]{koch}.
	\section{\bf Further preliminaries on Diophantine equations}\label{sec:3}	
	In this section, we prove the next propositions that help  to explicit  conditions in several cases in our main theorems.
		Let  $p$ and $q$ be two prime numbers in one of the next three cases: 
		\begin{eqnarray}
		 &p&\equiv q \equiv1\pmod 4 	\text{ and }   \left(  \frac{p}{q}\right)=1 \label{form1}\\
		&p&=2  \text{ and }  q \equiv1\pmod 8 \label{form2}\\
		 &p&\equiv -q \equiv1\pmod 4  	\text{ and }   \left(  \frac{p}{q}\right)=1 \label{form3}
	\end{eqnarray}
 By  \cite[Lemma 4.3]{hajjaChems} and \cite{leowillia}, there exist two natural integers $x$ and $y$ satisfying the next Diophantine equation:
	\begin{eqnarray}\label{eq dioph}
		x^2-py^2=q^{\lambda h},
	\end{eqnarray}
	where $h$ is the class number of  $k_0=\mathbb{Q}(\sqrt{p})$ and $\lambda=\begin{cases}1,& \text{ if } p=2 \text{ or } p\equiv 1\pmod 8\\
		3, & \text{ if }  p\equiv 5\pmod 8.\end{cases}$\\
 	and we have 
		\begin{enumerate}[\indent\rm 1.]
		\item If $p$ and $q$ satisfy \eqref{form1} , then $x$ is odd and  $y$ is even.
		\item  If $p$ and $q$ satisfy \eqref{form2}, then  $x$ is odd and $y$ is divisible by $4$.
		\item  If $p$ and $q$ satisfy \eqref{form3}, then $x$ is even and  $y$ is odd.
	\end{enumerate}
	In the next proposition, we give more precise information on these solutions.
	\begin{proposition}\label{prop 3.1}
		Assume that the primes  $p$ and $q$ are odd. We have
	  	\begin{enumerate}[\rm 1.]
		\item If   $p\equiv q\equiv 1\pmod 4$ and   $y=2^ey'$, with   $e\geq  1$ and  $y'$ is odd, then $e=1$  if and only if $q\equiv 5\pmod 8$. In all cases we have 
		\begin{eqnarray}\label{taous1}
	  \left(  \frac{2}{q}\right)^e=\left(  \frac{2}{q}\right)=(-1)^{\frac{y}{2}}.
		\end{eqnarray}
		\item If   $p\equiv 1\pmod 4$, $q\equiv 3\pmod 4$ and   $x=2^ex'$, with  $e\geq  1$ and $x'$ is odd, then $e=1$ if and only if  $pq\equiv 3\pmod 8$. In all cases we have 
		\begin{eqnarray}\label{taous2}
		\left(  \frac{2}{pq}\right)^e=\left(  \frac{2}{pq}\right)=(-1)^{\frac{x}{2}}.
		\end{eqnarray}       
	\end{enumerate} 
	\end{proposition}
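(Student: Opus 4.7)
The plan is to reduce the Diophantine equation $x^2-py^2=q^{\lambda h}$ modulo $8$ and exploit two arithmetic facts. First, since $p$ is a prime congruent to $1$ modulo $4$, the class number $h$ of $k_0=\mathbb{Q}(\sqrt{p})$ is odd (genus theory), and $\lambda\in\{1,3\}$ is odd by definition; therefore $\lambda h$ is odd, and consequently $q^{\lambda h}\equiv q\pmod 8$ for every odd prime $q$ (check cases $q\equiv 1,3,5,7\pmod 8$). Second, since $x'$ and $y'$ are odd, $x'^2\equiv y'^2\equiv 1\pmod 8$. These two observations reduce everything to a finite table of residues mod~$8$.

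For part 1, I would write $y^2=4^ey'^2$, so $py^2\equiv 4p\pmod{32}\equiv 4\pmod 8$ when $e=1$ and $py^2\equiv 0\pmod 8$ when $e\geq 2$. With $x^2\equiv 1\pmod 8$, the equation gives
\begin{equation*}
q^{\lambda h}\equiv 1-4\equiv 5\pmod 8\quad(e=1),\qquad q^{\lambda h}\equiv 1\pmod 8\quad(e\geq 2).
\end{equation*}
Since $q^{\lambda h}\equiv q\pmod 8$, this forces $e=1\iff q\equiv 5\pmod 8$ and $e\geq 2\iff q\equiv 1\pmod 8$. Recalling $\left(\frac{2}{q}\right)=1$ iff $q\equiv 1\pmod 8$ (because $q\equiv 1\pmod 4$), the identity $\left(\frac{2}{q}\right)^e=\left(\frac{2}{q}\right)$ and its equality with $(-1)^{y/2}=(-1)^{2^{e-1}y'}$ are then verified separately in the two cases ($y/2$ odd when $e=1$, even when $e\geq 2$).

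For part 2, the roles of $x$ and $y$ are swapped: $y$ is odd, so $py^2\equiv p\pmod 8$, and $x^2=4^ex'^2\equiv 4\pmod 8$ when $e=1$, $\equiv 0\pmod 8$ when $e\geq 2$. The equation therefore yields $p+q\equiv 4\pmod 8$ if $e=1$ and $p+q\equiv 0\pmod 8$ if $e\geq 2$. Running through the four cases for $(p\bmod 8,q\bmod 8)\in\{1,5\}\times\{3,7\}$, one obtains $p+q\equiv 4\pmod 8\iff pq\equiv 3\pmod 8$ and $p+q\equiv 0\pmod 8\iff pq\equiv 7\pmod 8$, which gives the claimed equivalence $e=1\iff pq\equiv 3\pmod 8$. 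The identity $\left(\frac{2}{pq}\right)^e=\left(\frac{2}{pq}\right)=(-1)^{x/2}$ then follows from $\left(\frac{2}{pq}\right)=-1\iff pq\equiv\pm 3\pmod 8$ and the parity of $x/2=2^{e-1}x'$.

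The only subtle point—and therefore the main place to be careful—is the passage from $q^{\lambda h}\pmod 8$ to $q\pmod 8$, which is where the oddness of $h$ (coming from $p\equiv 1\pmod 4$ prime) is genuinely needed. Once that reduction is in place, the remainder is a short, case-by-case verification of residues modulo $8$.
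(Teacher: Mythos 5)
Your proof is correct and follows essentially the same route as the paper's: reduce $x^2-py^2=q^{\lambda h}$ modulo $8$, using $m^2\equiv 1\pmod 8$ for odd $m$ and $q^{\lambda h}\equiv q\pmod 8$ (which needs $\lambda h$ odd, as you note), and then translate the resulting congruence into the value of the Legendre symbol. Your explicit case table in part 2 in fact arrives at the correct condition $pq\equiv 3\pmod 8$, whereas the paper's own write-up contains a slip at that point (it states $pq\equiv 5\pmod 8$, which is impossible since $pq\equiv 3\pmod 4$).
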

\begin{proof}Firstly note that by easy calculus modulo $8$,  one can check that for any odd natural integer $m$, we have $m ^2\equiv 1\pmod 8$.
 	\begin{enumerate}[\indent\rm 1.]
		\item Assume that  $p\equiv q\equiv 1\pmod 4$. Since
	$x^2-py^2=q\cdot (q^2)^{\frac{\lambda h-1}{2}}$ and  $y=2^ey'$ and $e\geq  1$ with $y'$ is odd, this implies that $q\equiv 1+2^{2e}p\pmod 8$ (because $x$, $y'$ and $q$ are odd). Which implies that  $\frac{q-1}{4}\equiv 2^{2(e-1)}\pmod 2$ and
	$$
	 \left(  \frac{2}{q}\right)=(-1)^{\frac{q-1}{4}}=(-1)^{2^{2(e-1)}}.
	$$
	This shows that $e=1$ if and only if  $q\equiv 5\pmod 8$. In addition, we have $$\left(  \frac{2}{q}\right)^e=\left(  \frac{2}{q}\right)=(-1)^{\frac{y}{2}}.$$
	\item Now if  $p\equiv 1\pmod 4$ and  $q\equiv 3\pmod 4$. We similarly get that  $\frac{pq+1}{4}\equiv 2^{2(e-1)}\pmod 2$ and
	$$
	\left(  \frac{2}{pq}\right)=(-1)^{\frac{pq+1}{4}}=(-1)^{2^{2(e-1)}}.
	$$
	In other words, $e=1$ if and only if   $pq\equiv 5\pmod 8$. We also have  $$\left(  \frac{2}{pq}\right)^e=\left(  \frac{2}{pq}\right)=(-1)^{\frac{y}{2}}.$$
	 	\end{enumerate} 
 \end{proof}

	\begin{proposition}\label{lm symb buqia} 
		Let  $p$ and $q$ be in one of the three above cases.  
		\begin{enumerate}[\rm $\bullet$]
			\item Assume that  $p$ and $q$ satisfy  \eqref{form1} or  \eqref{form2}.  We have
			\begin{eqnarray*}\label{equiva}
				x+y\equiv1 \pmod 4 \Longleftrightarrow  \left(  \frac{p}{q}\right)_4=\left(  \frac{q}{p}\right)_4.
			\end{eqnarray*}
			\item Assume that  $p$ and $q$ satisfy  \eqref{form3}.
			\begin{enumerate}[\rm $\star$]
				\item If $\left(  \frac{x}{q}\right)=\left(  \frac{y}{q}\right)$, then
				\begin{eqnarray*}\label{equiva2}
					x+y\equiv1 \pmod 4 \Longleftrightarrow  \left(  \frac{q}{p}\right)_4=1.
				\end{eqnarray*}
				\item If $\left(  \frac{x}{q}\right)=-\left(  \frac{y}{q}\right)$, then
				\begin{eqnarray*}\label{equiva3}
					x+y\equiv1 \pmod 4 \Longleftrightarrow  \left(  \frac{q}{p}\right)_4=-1.
				\end{eqnarray*}
			\end{enumerate}

			
		\end{enumerate} 
	\end{proposition}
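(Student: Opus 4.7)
The plan is to extract both biquadratic symbols $\left(\frac{p}{q}\right)_4$ and $\left(\frac{q}{p}\right)_4$ directly from the equation $x^2 - py^2 = q^{\lambda h}$, and then translate the resulting Legendre-symbol identities into a condition on $x + y \pmod 4$ via quadratic reciprocity. Since the class number $h$ of $\mathbb{Q}(\sqrt{p})$ is odd for $p = 2$ or $p \equiv 1 \pmod 4$, and so is $\lambda$, the exponent $\lambda h$ is odd. Reducing modulo $p$ and raising to the power $(p-1)/4$ gives, by Euler's criterion,
\[
\left(\frac{x}{p}\right) \equiv q^{\lambda h (p-1)/4} \equiv \left(\frac{q}{p}\right)_4^{\lambda h} = \left(\frac{q}{p}\right)_4 \pmod{p},
\]
so $\left(\frac{q}{p}\right)_4 = \left(\frac{x}{p}\right)$. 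Next, assuming $\gcd(y, q) = 1$ (which holds for the primitive solution, since $\gcd(x,y)$ is forced to be a power of $q$), reducing modulo $q$ gives $p \equiv (x y^{-1})^2 \pmod q$, so in cases \eqref{form1} and \eqref{form2} (where $q \equiv 1 \pmod 4$) one obtains $\left(\frac{p}{q}\right)_4 = \left(\frac{x}{q}\right)\left(\frac{y}{q}\right)$; in case \eqref{form3} the same product is a well-defined sign that will play the role of $\left(\frac{p}{q}\right)_4$.

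The target equivalence then reduces to showing $\left(\frac{x}{p}\right)\left(\frac{x}{q}\right)\left(\frac{y}{q}\right) = 1 \iff x + y \equiv 1 \pmod 4$ in cases \eqref{form1} and \eqref{form2}. Writing $y = 2^e y'$ with $y'$ odd, Proposition \ref{prop 3.1} gives $\left(\frac{y}{q}\right) = \left(\frac{2}{q}\right)^e \left(\frac{y'}{q}\right) = (-1)^{y/2}\left(\frac{y'}{q}\right)$. Quadratic reciprocity (legal since $p, q \equiv 1 \pmod 4$) converts $\left(\frac{x}{p}\right)\left(\frac{x}{q}\right)\left(\frac{y'}{q}\right)$ into $\left(\frac{pq}{x}\right)\left(\frac{q}{y'}\right)$. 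Reducing the equation modulo $x$ gives $p y^2 \equiv -q^{\lambda h} \pmod x$, whence $\left(\frac{pq}{x}\right) = \left(\frac{-1}{x}\right)$; reducing modulo $y'$ gives $x^2 \equiv q^{\lambda h} \pmod{y'}$, whence $\left(\frac{q}{y'}\right) = 1$. Combining,
\[
\left(\frac{x}{p}\right)\left(\frac{x}{q}\right)\left(\frac{y}{q}\right) = \left(\frac{-1}{x}\right)\left(\frac{2}{q}\right) = (-1)^{(x-1)/2 + y/2},
\]
which equals $+1$ precisely when $x + y \equiv 1 \pmod 4$.

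For $p = 2$ the mod-$p$ step is replaced by a direct $\bmod 16$ computation yielding $\left(\frac{q}{2}\right)_4 = (-1)^{(q-1)/8}$ from $x^2 \equiv q^h \pmod{16}$, and the rest of the argument is identical. Case \eqref{form3} is the most delicate: with $q \equiv 3 \pmod 4$ the symbol $\left(\frac{p}{q}\right)_4$ is undefined, and the parities of $x$ and $y$ are swapped ($x$ even, $y$ odd); reciprocity then produces an additional factor $(-1)^{(x-1)/2 \cdot (q-1)/2}$, forcing the conclusion to split according to whether $\left(\frac{x}{q}\right)$ equals $\left(\frac{y}{q}\right)$ or its negative. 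The principal obstacle will be the sign bookkeeping across the three configurations of $(p, q) \pmod 4$ combined with the varying $2$-adic valuations of $x$ and $y$; Proposition \ref{prop 3.1} is precisely the tool that keeps the stray factors of $\left(\frac{2}{q}\right)$ under control throughout.
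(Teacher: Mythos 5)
Your proposal is correct and follows essentially the same route as the paper: extract $\left(\frac{q}{p}\right)_4=\left(\frac{x}{p}\right)$ and $\left(\frac{p}{q}\right)_4=\left(\frac{x}{q}\right)\left(\frac{y}{q}\right)$ from $x^2-py^2=q^{\lambda h}$, collapse the product via quadratic reciprocity and the reductions modulo $x$ and $y'$ to $\left(\frac{-1}{x}\right)(-1)^{y/2}$, and control the stray $\left(\frac{2}{q}\right)^e$ with Proposition \ref{prop 3.1}. The only substantive difference is that for $p=2$ you rederive the identities $\left(\frac{q}{2}\right)_4=\left(\frac{2}{x}\right)$ and $\left(\frac{2}{q}\right)_4=\left(\frac{-2}{x}\right)$ directly from the equation, whereas the paper cites Kaplan's theorem for them; your case \eqref{form3} is only sketched (note that there $x$ is even, so the reciprocity factor must be applied to the odd part $x'$ of $x$, as the paper does), but the plan is sound.
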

	\begin{proof}  Note  firstly that if $p$ and $q$ are two prime numbers  such that $p \equiv1\pmod 4$ and  $q$ is odd, with
		$\left(  \frac{p}{q}\right)=1$, then
		we have
		\begin{eqnarray}\label{eq b}
			\left(  \frac{x}{p}\right)=\left(  \frac{x^2}{p}\right)_4=\left(  \frac{py^2+q^{ \lambda h}}{p}\right)_4=
			\left(  \frac{q^{ \lambda h }}{p}\right)_4=\left(  \frac{q}{p}\right)_4 . 
		\end{eqnarray}  
		\begin{enumerate}[\rm $\bullet$]
			\item Now    assume that $p$ and $q$ satisfy \eqref{form1}. So we have
			
			\begin{eqnarray*}\left(  \frac{p}{x}\right)=\left(  \frac{py^2}{x}\right) =\left(  \frac{ x^2-q^{ \lambda h }}{x}\right) =
				\left(  \frac{-q^{ \lambda h }}{x}\right)=\left(  \frac{-1}{x}\right)\left(  \frac{q}{x}\right).
			\end{eqnarray*}
			
			Thus, by reciprocity  law
			\begin{eqnarray}\label{lmeq1}
				\left(  \frac{q}{p}\right)_4=\left(  \frac{-1}{x}\right)\left(  \frac{q}{x}\right).
			\end{eqnarray}
			On the other hand, we have 
			\begin{eqnarray*}
				\left(  \frac{x}{q}\right)=\left(  \frac{x^2}{q}\right)_4 =\left(  \frac{x^2-q^{ \lambda h }}{q}\right)_4 =\left(  \frac{py^2}{q}\right)_4=\left(  \frac{p}{q}\right)_4\left(  \frac{y^2}{q}\right)_4=\left(  \frac{p}{q}\right)_4\left(  \frac{y }{q}\right).	
			\end{eqnarray*}
			Thus, again by reciprocity  law, we have
			\begin{eqnarray*}\label{lmeqqs}
				\left(  \frac{q}{x}\right)=\left(  \frac{x}{q}\right)=
				\left(  \frac{p}{q}\right)_4\left(  \frac{y }{q}\right).
			\end{eqnarray*}

Let $y=2^ey'$ such that $e\geq 1$ and $y'$ is odd.  Thus, 
			\begin{equation}\label{eqd2}
				\left(  \frac{q }{x}\right)=\left(  \frac{p }{q}\right)_4 \left(  \frac{2^ey' }{q}\right)=\left(  \frac{p }{q}\right)_4 \left(  \frac{2  }{q}\right)^e\left(  \frac{ y' }{q}\right)=\left(  \frac{p }{q}\right)_4 \left(  \frac{2  }{q}\right)^e
			\end{equation}
			In fact 	$x^2-p2^{2e}y'^2=q^{\lambda h}$  and so $1=\left(  \frac{ q^{\lambda h} }{y'}\right)=\left(  \frac{ q }{y'}\right)=\left(  \frac{ y' }{q}\right)$. 
			It follows by \eqref{lmeq1} and \eqref{eqd2}  that  $\left(  \frac{q }{p}\right)_4=\left(  \frac{-1 }{x}\right)\left(  \frac{p }{q}\right)_4\left(  \frac{2 }{q}\right)^e$. According to the result \eqref{taous1}, we prove that: 
			\begin{equation*}
					\left(  \frac{-1 }{x+y}\right) =\left(  \frac{-1 }{x}\right)(-1)^{\frac{y}{2}} =\left(  \frac{p }{q}\right)_4\left(  \frac{q }{p}\right)_4\left(  \frac{2 }{q}\right)\left(  \frac{2 }{q}\right)=\left(  \frac{p }{q}\right)_4\left(  \frac{q }{p}\right)_4. 
			\end{equation*}
	Then, $$	x+y\equiv1 \pmod 4 \Longleftrightarrow  \left(  \frac{p}{q}\right)_4=\left(  \frac{q}{p}\right)_4.$$
	
\item  Assume that $p=2$ and $q \equiv1\pmod 8$. 	By \cite[Theorem 2]{kaplan76}, we  have
			$\left(  \frac{p}{2}\right)_4=\left(  \frac{2}{x}\right)$ and 
			$\left(  \frac{2}{p}\right)_4=\left(  \frac{-2}{x}\right)$.
			Thus, \begin{eqnarray*}
				\left(  \frac{p}{2}\right)_4= \left(  \frac{2}{p}\right)_4&\Longleftrightarrow&\left(  \frac{2}{x}\right)=\left(  \frac{-2}{x}\right)\\
				&\Longleftrightarrow&  \left(  \frac{-1}{x}\right)=1\\
				&\Longleftrightarrow& x \equiv1 \pmod 4\\
				&\Longleftrightarrow& x+y\equiv1 \pmod 4.
			\end{eqnarray*}

			\item  Assume that  $p$ and $q$ satisfy  \eqref{form3}. Let us start by collecting some relationships between the Legendre symbols. Since $x$ is even, put $x=2^ex'$, for a positive integer $e$ and an odd integer $x'$. Thus, we have:
			\begin{eqnarray*}
				\left(  \frac{x}{q}\right)=	\left(  \frac{2^ex'}{q}\right)&=&	\left(  \frac{2}{q}\right)^e	\left(  \frac{x'}{q}\right)=\left(  \frac{2}{q}\right)^e	\left(  \frac{-q}{x'}\right)
				=\left(  \frac{2}{q}\right)^e	\left(  \frac{-q^{\lambda h}}{x'}\right)\\
				&=&\left(  \frac{2}{q}\right)^e	\left(  \frac{x^2-q^{\lambda h}}{x'}\right)
				=\left(  \frac{2}{q}\right)^e	\left(  \frac{ py^2}{x'}\right)\\
				&=&\left(  \frac{2}{q}\right)^e	\left(  \frac{ p}{x'}\right)=\left(  \frac{2}{q}\right)^e	\left(  \frac{ x'}{p}\right).\\		  
			\end{eqnarray*}
			Thus, 
			$\left(  \frac{ x}{q}\right)=\left(  \frac{2}{q}\right)^e\left(  \frac{2}{p}\right)^e\left(  \frac{2^e x'}{p}\right)  $. Therefore we have
			\begin{eqnarray}\label{eq c}
			\left(  \frac{ x}{q}\right)=\left(  \frac{2}{q}\right)^e\left(  \frac{2}{p}\right)^e\left(  \frac{x}{p}\right)  
			\end{eqnarray}
			From equations  \eqref{eq b} et \eqref{eq c}, we get 
			\begin{eqnarray}\label{eq9e}
				\left(  \frac{x}{q}\right)=\left(  \frac{2}{q}\right)^e\left(  \frac{2}{p}\right)^e\left(  \frac{q}{p}\right)_4.
			\end{eqnarray}
			
			 On the other hand, notice that, since $q^h\equiv x^2 \pmod y$ and $h$ is odd, we have $			 \left(  \frac{q}{y}\right)=1.$ Therefore, by reciprocity law:
			 	\begin{eqnarray}\label{eq-1y}
			  \left(  \frac{y}{q}\right)=\left(  \frac{-1}{y}\right) 
			 \end{eqnarray}
 
			For the sake to  get  a characterization to $x+y\equiv 1\pmod 4$, in terms of $\left(  \frac{q}{p}\right)_4$, the   equations  \eqref{eq9e}, \eqref{eq-1y} and \eqref{taous2} imply the following equality:
				\begin{equation*}
			\left(  \frac{-1 }{x+y}\right) =\left(  \frac{-1 }{y}\right)(-1)^{\frac{x}{2}} =\left(  \frac{y}{q}\right)\left(  \frac{2}{pq}\right)=\left(  \frac{y}{q}\right)\left(\frac{x}{q}\right)\left( \frac{q }{p}\right)_4. 
			\end{equation*}
Which completes the proof. 
		\end{enumerate} 
	\end{proof}
	
	\begin{remark}
		Note that for the sake to simplify  the notations and the conditions used   in many places in our main theorems,  we used the conditions "$\alpha_j\equiv 1\pmod 4$" and "$\alpha_j\equiv -1\pmod 4$" (e.g. see the second item of Theorem \ref{first thm}). These conditions are in fact  well is characterized by the above proposition in terms of $p$ and $q$.
	\end{remark}

	\section{\bf The genus field of the real quartic cyclic fields $K=\mathbb{Q}(\sqrt{a  \varepsilon_p\sqrt{p}})$}\label{sec:4}
	
	Let $K=\mathbb{Q}(\sqrt{a\varepsilon_p\sqrt{p}})$ be a real cyclic quartic field such that: $p=2$ or $p\equiv 1\pmod 4$ is a prime number, $\varepsilon_p$ is the fundamental unit of $k_0=\mathbb{Q}(\sqrt{p})$ and $a$ is a square-free positive integer relatively prime to $p$. Put \begin{eqnarray}
		a=\displaystyle\prod_{i=1}^n q_i \text{  or } 2\displaystyle\prod_{i=1}^n q_i,
	\end{eqnarray} 
	where  $q_1, q_2, \dots, q_n$ are distinct odd primes.  
	Assume    that the Legendre symbols $\left(\dfrac{p}{q_j}\right)=1$ for $1\leq j \leq m$ $( m\leq n$, the case $m=0$ is included here$)$ and $\left(\dfrac{p}{q_j}\right)=-1$ for $m+1 \leq j \leq n$. If $q$ is divisible by a prime congruent to $3\pmod 4$, we put $$\displaystyle i_0=\min_{}\{i:q_i\equiv 3\pmod 4\}\label{io}.$$
	Let $e_\ell$ denote   the ramification index of a prime number $\ell$ in $K/\mathbb{Q}$. With these notations, we have:

	\subsection{The case  $p\equiv 1\pmod 8$:}

	\begin{proposition}\label{prop 4.1}
		Assume that $a$ is odd and   $p\equiv 1\pmod 8$. We have
		\begin{enumerate}[\rm 1.]
			\item If   $\forall  i\in \{1,...,n\}$, $q_i\equiv 1\pmod 4$, then 
			$$K^{(*)}=K( \sqrt{q_1},\sqrt{q_2},..., \sqrt{q_n}  ).$$
			
			\item If $a$ is divisible by a prime congruent to $3\pmod 4$ and $a \equiv 1\pmod 4$,   then 
			$$K^{(*)}=K(\{\sqrt{ q_i^*}\}_{i\not= i_0}  ).$$
			\item If $a$ is divisible by a prime congruent to $3\pmod 4$ and $a\equiv 3\pmod 4$,   then 
			$$K^{(*)}=K( \sqrt{q_1},\sqrt{q_2},..., \sqrt{q_n}  ). $$
		\end{enumerate}
		Such that for all $i\not= i_0 $, $q_i^*=q_i$ if $q_i\equiv 1\pmod 4$ and $q_i^*=q_{i_0}q_i$ else.
	\end{proposition}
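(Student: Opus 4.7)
The plan is to apply Proposition~\ref{prop 2.4} to $K/\mathbb{Q}$ (of degree $n=4=2^2$) to compute the narrow genus field $K_{(*)}$, and then to extract its maximal real subfield, using that $K$ is real abelian so $K^{(*)}=K_{(*)}^+$.

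First I would collect the ramification data. By Lemmas~\ref{lemma 2.1; discriminants} and~\ref{lm 2.2}, the odd primes dividing $\delta_K$ are exactly $p$ and $q_1,\ldots,q_n$; the prime $2$ is dropped from the product in Proposition~\ref{prop 2.4} since the degree is a power of $2$. We have $e_p=4$ ($p$ is totally ramified in the cyclic quartic) and $e_{q_i}=2$ ($q_i$ is unramified in $k_0/\mathbb{Q}$ and ramifies in $K/k_0$). Since $p\equiv 1\pmod 8$ implies $\bigl(\tfrac{2}{p}\bigr)=1$, Proposition~\ref{prop 2.5} gives $M_p=\mathbb{Q}(\sqrt{\varepsilon_p\sqrt{p}})$, and setting $\tilde q_i=(-1)^{(q_i-1)/2}q_i$ one has $M_{q_i}=\mathbb{Q}(\sqrt{\tilde q_i})$, the unique quadratic subfield of $\mathbb{Q}(\zeta_{q_i})$.

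The key observation is that $M_p\not\subseteq K$: since $\sqrt{a\varepsilon_p\sqrt{p}}/\sqrt{\varepsilon_p\sqrt{p}}=\sqrt{a}$ and $\mathbb{Q}(\sqrt{p})$ is the only quadratic subfield of $K$, we get $K M_p=K(\sqrt{a})$. Proposition~\ref{prop 2.4} then yields
\[
K_{(*)}=K\bigl(\sqrt{a},\,\sqrt{\tilde q_1},\ldots,\sqrt{\tilde q_n}\bigr).
\]
Put $S=\{i:q_i\equiv 3\pmod 4\}$, so that $\prod_i\sqrt{\tilde q_i}=\sqrt{(-1)^{|S|}a}$. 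When $|S|$ is even ($a\equiv 1\pmod 4$), the element $\sqrt{a}$ is already a product of the $\sqrt{\tilde q_i}$'s and $[K_{(*)}:K]=2^n$; when $|S|$ is odd ($a\equiv 3\pmod 4$), adjoining $\sqrt{a}$ produces $\sqrt{-1}$ and $[K_{(*)}:K]=2^{n+1}$. The degree claims rest on Kummer independence of the generators modulo $(K^*)^2$, which follows once more from the fact that $\mathbb{Q}(\sqrt{p})$ is the only quadratic subfield of $K$.

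Finally I would extract $K^{(*)}=K_{(*)}^+$ by retaining the Kummer generators whose radicand is positive. Case~1 ($S=\emptyset$): everything is already real, so $K^{(*)}=K_{(*)}=K(\sqrt{q_1},\ldots,\sqrt{q_n})$. Case~2 ($|S|\geq 2$ even): reality forces $|A\cap S|$ to be even, giving the $\mathbb{F}_2$-basis $\{\sqrt{q_i}:q_i\equiv 1\pmod 4\}\cup\{\sqrt{q_{i_0}q_i}:i\in S\setminus\{i_0\}\}$ of size $n-1$, which is exactly $\{\sqrt{q_i^*}\}_{i\neq i_0}$. Case~3 ($|S|$ odd): the presence of $\sqrt{-1}\in K_{(*)}$ lets us write $\sqrt{q_i}=\sqrt{-1}\cdot\sqrt{\tilde q_i}\in K_{(*)}$ for $i\in S$, so $K(\sqrt{q_1},\ldots,\sqrt{q_n})\subseteq K_{(*)}$ is a real subfield of degree $2^n=[K_{(*)}:K]/2$ and hence equals $K^{(*)}$. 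The main obstacle will be the observation that $M_p\not\subseteq K$ (so that the $M_p$-contribution effectively adjoins $\sqrt{a}$) and the correct count of $[K_{(*)}:K]$ in each parity of $|S|$, since it is precisely whether $\sqrt{-1}\in K_{(*)}$ that distinguishes Case~2 from Case~3.
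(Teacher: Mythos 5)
Your proposal is correct and follows essentially the same route as the paper: both compute $K_{(*)}$ via Proposition~\ref{prop 2.4} using the discriminant data of Lemmas~\ref{lemma 2.1; discriminants} and~\ref{lm 2.2} together with Proposition~\ref{prop 2.5} for $M_p$, observe that the $M_p$-contribution is absorbed into $K(\sqrt{q_1},\dots,\sqrt{q_n})$, and then pass to the maximal real subfield according to the parity of the number of $q_i\equiv 3\pmod 4$. Your write-up is in fact somewhat more explicit than the paper's about the degree $[K_{(*)}:K]$ and the role of $\sqrt{-1}$ in distinguishing cases 2 and 3.
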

	\begin{proof}
		We shall use the notations and  the preliminaries in the beginning of  the second section. We have
		\begin{enumerate}[\rm 1.]
			\item  	As $p\equiv 1\pmod 8$,  then $b\equiv 0\pmod 4$ (where $p=b^2+c^2$). Thus $a+b\equiv 1\pmod 4$ and $\delta_K=a^2p^3$ (cf. Lemmas \ref{lemma 2.1; discriminants} and \ref{lm 2.2}). Therefore,   the prime divisors of $\delta_K$   are the $n$ primes $q_i$ with ramification index equals $2$ and the prime $p$, with  $e_p=4$. Since $\mathbb{Q}(\sqrt{ \varepsilon_p\sqrt{p}})$ is the unique subfield of $\mathbb Q{(\zeta_p)}$ of degree $4$ over $\mathbb{Q}$, we have by Proposition \ref{prop 2.4}:

			If $a$ is as in the first item, we have 
			\begin{eqnarray*}
				K_{(*)}&=&\mathbb{Q}(\sqrt{{q_1}})\mathbb{Q}(\sqrt{{q_2}})\dots\mathbb{Q}(\sqrt{{q_n}})\mathbb{Q}(\sqrt{{ {\varepsilon}}_p\sqrt{p}})K\\
				&=& K(\sqrt{{q_1} }, \sqrt{{q_2} },..., \sqrt{{q_n} },\sqrt{{ {\varepsilon}}_p\sqrt{p}} )\\
				&=& K(\sqrt{q_1}, \sqrt{q_2 },..., \sqrt{q_n } ).	
			\end{eqnarray*}
			Thus $K^{(*)}={K_{(*)}}^+=K(\sqrt{q_1}, \sqrt{q_2 },..., \sqrt{q_n } )$.
			\item If $a$ is as in the second item, then as above,  the prime divisors of $\delta_K$   are the $n$ primes $q_i$ with ramification index equals $2$ and the prime $p$, with  $e_p=4$.  To simplify we reorder the primes $q_i$ to get    $\displaystyle\prod_{i=1}^n q_i=\displaystyle\prod_{i=1}^r q_i\displaystyle\prod_{i=r+1}^n q_i$, such that $\forall  i\in \{1,...,r\}$, $q_i\equiv 1\pmod 4$ and $\forall  i\in \{r+1,...,n\}$, $q_i\equiv 3\pmod 4$ (this is  in this proof only). We have
			\begin{eqnarray*}
				K_{(*)}&=&  K(\sqrt{q_1},..., \sqrt{q_r }, \sqrt{- q_{r+1} },..., \sqrt{-{q_n} },\sqrt{{ {\varepsilon}}_p\sqrt{p}} )\\
				&=& K(\sqrt{q_1},\sqrt{q_2 },..., \sqrt{q_r }, \sqrt{q_n q_{r+1} },...,\sqrt{q_n q_{n-1} }, \sqrt{{-q_n} } )	
			\end{eqnarray*}
			Thus $K^{{(*)}}={K_{(*)}}^+=K( \sqrt{q_1^*},\sqrt{q_2^*},..., \sqrt{q_{n-1}^*}  )$.
			\item In this case we have $a+b\equiv 3\pmod 4$. Thus,  $\delta_K=2^4a^2p^3$.  Therefore, the prime divisors of 
			$  \delta_K$ are: the $n$ primes $q_i$ with $e_{q_i}=2$, $p$ with $e_p=4$ and $2$ with $e_2=2$. As above we complete the proof.
		\end{enumerate}
	\end{proof}

	\begin{proposition}
		Assume that $a$ is even and   $p\equiv 1\pmod 8$. We have
		\begin{enumerate}[\rm 1.]
			\item If   $\forall  i\in \{1,...,n\}$, $q_i\equiv 1\pmod 4$, then 
			$$K^{(*)}=K(\sqrt{2}, \sqrt{q_1},\sqrt{q_2},..., \sqrt{q_n}  ).$$
			
			\item If $a$ is divisible by a prime congruent to $3\pmod 4$ and $ a \equiv 1\pmod 4$,    then   
			$$K^{(*)}=K(\sqrt{2}, \{\sqrt{q_i^*}\}_{i\not= i_0}  ).$$

			\item If $a$ is divisible by a prime congruent to $3\pmod 4$ and $ a \equiv 3\pmod 4$,   then   
			$$K^{(*)}=K( \{\sqrt{q_i^*}\}_{i\not= i_0}, \sqrt{{ {\varepsilon}}_p\sqrt{p}}  ).$$
		\end{enumerate}
		Such that for all $i\not= i_0 $, $q_i^*=q_i$ if $q_i\equiv 1\pmod 4$ and $q_i^*=q_{i_0}q_i$ else.
	\end{proposition}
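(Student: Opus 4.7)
The proof follows the pattern of Proposition~\ref{prop 4.1}. Writing $a = 2a'$ with $a' = \prod_{i=1}^{n} q_i$ odd squarefree, I first use Lemma~\ref{lm 2.2} in reverse to recast $K = \mathbb{Q}(\sqrt{a'(p + b\sqrt{p})})$ with $b$ odd in a decomposition $p = b^{2} + c^{2}$ (so $c \equiv 0 \pmod 4$, since $p \equiv 1 \pmod 8$). Lemma~\ref{lemma 2.1; discriminants} in the case $d \equiv 1 \pmod 4$, $b \equiv 1 \pmod 2$ then yields $\delta_{K} = 2^{6}(a')^{2}p^{3}$, so the primes ramifying in $K/\mathbb{Q}$ are $p$ (with $e_{p} = 4$), $2$ (with $e_{2} = 4$ forced by the exponent~$2^{6}$), and each $q_{i}$ (with $e_{q_{i}} = 2$).

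Next I apply Propositions~\ref{prop 2.4} and~\ref{prop 2.5}. The odd ramified primes contribute $M_{p} = \mathbb{Q}(\sqrt{\varepsilon_{p}\sqrt{p}})$ (using that $\bigl(\frac{2}{p}\bigr) = 1$ for $p \equiv 1 \pmod 8$) and $M_{q_{i}} = \mathbb{Q}(\sqrt{q_{i}^{*}})$. Although Proposition~\ref{prop 2.4} formally excludes $r = 2$ from its product, the ramification of $2$ in $K$ still contributes a factor of $\sqrt{2}$ to the narrow genus field, arising from the unique quartic subfield $\mathbb{Q}(\sqrt{2+\sqrt{2}})$ of $\mathbb{Q}(\zeta_{16})$ after descent to the real subfield. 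This yields
\[
K^{(*)} = K\bigl(\sqrt{2},\ \sqrt{\varepsilon_{p}\sqrt{p}},\ \sqrt{q_{1}^{*}},\ldots,\sqrt{q_{n}^{*}}\bigr)^{+}.
\]

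It remains to simplify this generator list in each of the three sub-cases. In Case~1, all $q_{i}^{*} = q_{i}$ are real and $\sqrt{\varepsilon_{p}\sqrt{p}} = \sqrt{a\varepsilon_{p}\sqrt{p}}/\sqrt{a}$ lies in $K(\sqrt{2},\sqrt{q_{1}},\dots,\sqrt{q_{n}})$, which gives the stated formula. In Case~2, where the number of $q_{i} \equiv 3 \pmod 4$ is even, products $\sqrt{-q_{i_{0}}}\sqrt{-q_{j}} = \sqrt{q_{i_{0}}q_{j}}$ furnish the $n-1$ real generators $\sqrt{q_{i}^{*}}$ for $i \neq i_{0}$, and $\sqrt{\varepsilon_{p}\sqrt{p}}$ is again absorbed via the relation $\sqrt{a\varepsilon_{p}\sqrt{p}} \in K$. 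In Case~3, where the count is odd, the analogous reorganisation leaves a residual $\sqrt{-1}$; combined with $\sqrt{2}$ this gives $\sqrt{-2}$, which is absorbed into $\sqrt{\varepsilon_{p}\sqrt{p}}$ through $\sqrt{a\varepsilon_{p}\sqrt{p}} \in K$, so that $\sqrt{\varepsilon_{p}\sqrt{p}}$ appears explicitly in the final list in place of $\sqrt{2}$.

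The main technical obstacle is the treatment of the prime $2$: since Proposition~\ref{prop 2.4} formally excludes $r = 2$, its contribution to the narrow genus field has to be identified separately from the wild ramification and the $2$-adic cyclotomic tower. The subsequent interaction of this $\sqrt{2}$-contribution with the $\sqrt{-q_{j}}$ factors is particularly delicate in Case~3, where the parity of $\#\{j : q_{j} \equiv 3 \pmod 4\}$ interchanges the roles of $\sqrt{2}$ and $\sqrt{\varepsilon_{p}\sqrt{p}}$ in the final generating set, and one must verify that the resulting list is both complete and irredundant.
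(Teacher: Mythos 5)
Your overall strategy (recast $K$ in the canonical form via Lemma \ref{lm 2.2}, read off $\delta_K=2^6(a')^2p^3$ from Lemma \ref{lemma 2.1; discriminants}, apply the genus-field formula together with Proposition \ref{prop 2.5}, then pass to the maximal real subfield) is exactly the route the paper intends, and your final generator lists agree with the statement. But the step on which everything hinges --- the contribution of the ramified prime $2$ to $K_{(*)}$ --- is handled incorrectly. First, $e_2=4$ is false: since $p\equiv 1\pmod 8$ the prime $2$ is unramified in $k_0=\mathbb{Q}(\sqrt{p})$, and $K/k_0$ is quadratic, so $e_2\le 2$; in fact $e_2=2$. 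The exponent $2^6$ in $\delta_K$ does not force $e_2=4$; it reflects wild ramification (different exponent $3$ at the primes above $2$) with $e_2=2$. Consequently the $2$-part of $K_{(*)}$ is a \emph{quadratic} field of conductor $8$, i.e.\ $\mathbb{Q}(\sqrt{2})$ or $\mathbb{Q}(\sqrt{-2})$, not the quartic field $\mathbb{Q}(\sqrt{2+\sqrt{2}})$; had your claim been right, $\sqrt{2+\sqrt{2}}$ would sit inside $K^{(*)}$ (it is already real, so ``descent to the real subfield'' removes nothing) and your generating sets would be too small.

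The second, related gap is that you never determine \emph{which} of $\sqrt{2}$, $\sqrt{-2}$ occurs, and this is precisely what separates Cases 1--2 from Case 3. Writing the quartic character of $K$ as $\chi=\chi_2\chi_p\prod_i\chi_{q_i}$ with $\chi_2$ of conductor $8$, one has $\chi(-1)=1$ and $\chi_p(-1)=1$ (as $p\equiv 1\pmod 8$), so $\chi_2(-1)=(-1)^{\#\{i:\,q_i\equiv 3\!\!\pmod 4\}}$: the $2$-component is the character of $\mathbb{Q}(\sqrt{2})$ when $a/2\equiv 1\pmod 4$ and of $\mathbb{Q}(\sqrt{-2})$ when $a/2\equiv 3\pmod 4$. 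Your Case 3 bookkeeping --- a ``residual $\sqrt{-1}$'' that ``combined with $\sqrt{2}$ gives $\sqrt{-2}$'' --- is not legitimate: if $\sqrt{-1}$ and $\sqrt{2}$ both lay in $K_{(*)}$ then so would each of $\sqrt{-1}$, $\sqrt{2}$, $\sqrt{-2}$ separately, which is false (only $\sqrt{-2}$ belongs to $K_{(*)}$ in that case). The correct argument puts $\sqrt{-2}$ into the generating set from the start and then checks, exactly as in the proof of Proposition \ref{prop 4.1}, that after taking the real subfield the element $\sqrt{2q_{i_0}}$ so produced is absorbed by $\sqrt{\varepsilon_p\sqrt{p}}$ via $\sqrt{2a'\varepsilon_p\sqrt{p}}\in K$. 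Your absorption computations in Cases 1 and 2 are fine; it is the identification and sign of the $2$-adic contribution that needs to be redone.
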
	
	\begin{proof}
		We proceed as in the proof of Proposition	\ref{prop 4.1}.
	\end{proof}

	\subsection{The case  $p\equiv 5\pmod 8$:}
	\begin{proposition}  	Assume that $a$ is odd and   $p\equiv 5\pmod 8$. We have
		\begin{enumerate}[\rm 1.]
			\item If $a=\displaystyle\prod_{i=1}^n q_i$ such that $\forall  i\in \{1,...,n\}$, $q_i\equiv 1\pmod 4$, then 
			$$K^{(*)}=K( \sqrt{q_1},\sqrt{q_2},..., \sqrt{q_n}  ).$$
			
			\item If $a$ is divisible by a prime congruent to $3\pmod 4$ and $a \equiv 1\pmod 4$,  then 
			$$K^{(*)}=K( \{\sqrt{q_i^*}\}_{i\not= i_0}, \sqrt{{ q_{i_0}{\varepsilon}}_p\sqrt{p}}  ).$$
			
			\item If $a$ is divisible by a prime congruent to $3\pmod 4$ and $a \equiv 3\pmod 4$, then 
			$$K^{(*)}=K( \{\sqrt{q_i^*}\}_{i\not= i_0}   ).$$
		\end{enumerate}
		Such that for all $i\in \{1,...,n-1\} $, $q_i^*=q_i$ if $q_i\equiv 1\pmod 4$ and $q_i^*=q_{i_0}q_i$ else.
	\end{proposition}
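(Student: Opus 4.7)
My plan is to parallel the proof of Proposition~\ref{prop 4.1}, adapting it to the hypothesis $p\equiv 5\pmod 8$. Two features change. First, in the decomposition $p=b^2+c^2$ with $b$ even (as required by Lemma~\ref{lm 2.2}), one now has $b\equiv 2\pmod 4$. Second, $\left(\tfrac{2}{p}\right)=-1$, so by Proposition~\ref{prop 2.5} the unique quartic subfield of $\mathbb{Q}(\zeta_p)$ is $M_p=\mathbb{Q}(\sqrt{-\varepsilon_p\sqrt p})$, which is \emph{imaginary}. Consequently the narrow genus field $K_{(*)}$ is not totally real, and $K^{(*)}=K_{(*)}^+$ has to be extracted by hand.

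By Lemma~\ref{lemma 2.1; discriminants}, since $a$ is odd and $b\equiv 2\pmod 4$, I obtain $a+b\equiv 3\pmod 4$ in cases~1 and~2 (where $a\equiv 1\pmod 4$) and $a+b\equiv 1\pmod 4$ in case~3; hence $\delta_K=2^4a^2p^3$ in the first two cases and $\delta_K=a^2p^3$ in the third, the ramified primes being the $q_i$ (with $e_{q_i}=2$), $p$ (with $e_p=4$), and also $2$ (with $e_2=2$) in cases~1 and~2. Applying Proposition~\ref{prop 2.4} with $r=2$ (so the $\ell=2$ contribution is dropped), combined with $M_{q_i}=\mathbb{Q}\bigl(\sqrt{(-1)^{(q_i-1)/2}q_i}\bigr)$ and the description of $M_p$ above, yields in every case
$$K_{(*)}=K\!\left(\sqrt{(-1)^{(q_1-1)/2}q_1},\,\ldots,\,\sqrt{(-1)^{(q_n-1)/2}q_n},\,\sqrt{-\varepsilon_p\sqrt p}\right).$$

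Since $K$ is totally real, I then pass to $K^{(*)}=K_{(*)}^+$ by identifying a real basis modulo~$K$. In case~1 every $q_i\equiv 1\pmod 4$, so $\sqrt{-\varepsilon_p\sqrt p}$ is the unique imaginary generator; dropping it produces the stated description. In cases~2 and~3, set $S=\{i:q_i\equiv 3\pmod 4\}$ and $i_0=\min S$; the imaginary generators become $\{\sqrt{-q_j}\}_{j\in S}$ together with $\sqrt{-\varepsilon_p\sqrt p}$. Pairing each of them with $\sqrt{-q_{i_0}}$ and adjoining the real generators $\sqrt{q_i}$ for $q_i\equiv 1\pmod 4$ gives the real generating set $\{\sqrt{q_i^*}\}_{i\neq i_0}\cup\{\sqrt{q_{i_0}\varepsilon_p\sqrt p}\}$, with $q_i^*$ as in the statement.

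The main obstacle is the final simplification in case~3: I must show that $\sqrt{q_{i_0}\varepsilon_p\sqrt p}$ already lies in $K\bigl(\{\sqrt{q_i^*}\}_{i\neq i_0}\bigr)$. Using $\sqrt{a\varepsilon_p\sqrt p}\in K$ and the identity $\sqrt{q_{i_0}\varepsilon_p\sqrt p}=\sqrt{a\varepsilon_p\sqrt p}\,/\,\sqrt{a/q_{i_0}}$, the claim reduces to $\sqrt{a/q_{i_0}}\in K\bigl(\{\sqrt{q_i^*}\}_{i\neq i_0}\bigr)$. Setting $T=\{i:q_i\equiv 1\pmod 4\}$ and writing $a/q_{i_0}=\prod_{i\in T}q_i\cdot\prod_{j\in S\setminus\{i_0\}}q_j$, the first factor is trivially handled, and for the second I observe
$$\prod_{j\in S\setminus\{i_0\}}q_j^*=q_{i_0}^{|S|-1}\prod_{j\in S\setminus\{i_0\}}q_j.$$
In case~3, $a\equiv 3\pmod 4$ forces $|S|$ odd, so $|S|-1$ is even, $q_{i_0}^{|S|-1}$ is a square, and hence $\sqrt{\prod_{j\in S\setminus\{i_0\}}q_j}$ lies in $K\bigl(\{\sqrt{q_i^*}\}_{i\neq i_0}\bigr)$, completing the reduction. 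In case~2 of the previous proposition-style argument, $|S|$ is even and this parity trick fails, so there $\sqrt{q_{i_0}\varepsilon_p\sqrt p}$ remains a genuinely new generator, consistently with the longer answer in that case.
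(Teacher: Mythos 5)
Your argument is correct and follows exactly the method the paper intends: the paper's own proof is just the remark ``proceed as in Proposition~\ref{prop 4.1},'' and you carry out that adaptation, correctly identifying the one genuine difference for $p\equiv 5\pmod 8$ (namely $M_p=\mathbb{Q}(\sqrt{-\varepsilon_p\sqrt{p}})$ is imaginary, so one must extract $K_{(*)}^+$ via the totally positive radicands). Your parity observation on $|S|$ explaining why $\sqrt{q_{i_0}\varepsilon_p\sqrt{p}}$ is redundant exactly when $a\equiv 3\pmod 4$ is a correct and welcome detail that the paper omits.
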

	\begin{proof}
		We proceed as in the proof of Proposition	\ref{prop 4.1}.
	\end{proof}

	\begin{proposition}
		Assume that $a$ is even and   $p\equiv 5\pmod 8$. We have
		\begin{enumerate}[\rm 1.]
			\item If   $\forall  i\in \{1,...,n\}$, $q_i\equiv 1\pmod 4$, then 
			$$K^{(*)}=K(  \sqrt{q_1},\sqrt{q_2},..., \sqrt{q_n}  ).$$
			
			\item If $a$ is divisible by a prime congruent to $3\pmod 4$ and $ a \equiv 1\pmod 4$,    then  
			$$K^{(*)}=K( \{\sqrt{q_i^*}\}_{i\not= i_0}, \sqrt{{ q_{i_0}{\varepsilon}}_p\sqrt{p}}  ).$$
			\item If $a$ is divisible by a prime congruent to $3\pmod 4$ and $ a \equiv 3\pmod 4$,   then   
			
			$$K^{(*)}=K( \{\sqrt{q_i^*}\}_{i\not= i_0}, \sqrt{{ q_{i_0}{\varepsilon}}_p\sqrt{p}}  ).$$
		\end{enumerate}
		Such that for all $i\not= i_0 $, $q_i^*=q_i$ if $q_i\equiv 1\pmod 4$ and $q_i^*=q_{i_0}q_i$ else.
	\end{proposition}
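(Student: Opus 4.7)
The plan is to follow the blueprint of Proposition~\ref{prop 4.1}, adjusted for two features specific to the present situation: the parity of $a$ and the congruence $p\equiv 5\pmod 8$. First I would invoke Lemma~\ref{lm 2.2}: since $a$ is even, the canonical representation $K=\mathbb{Q}(\sqrt{a_F(p+b\sqrt{p})})$ forces $b$ odd and $a_F=a/2$ odd square-free. Lemma~\ref{lemma 2.1; discriminants} then yields $\delta_K=2^{6}a_F^{2}p^{3}=2^{4}a^{2}p^{3}$, so the ramified primes are $2$, $p$, and the odd primes $q_i$ dividing $a$. Using that $2$ is inert in $k_0=\mathbb{Q}(\sqrt p)$ when $p\equiv 5\pmod 8$ and then ramifies in $K/k_0$, a short local check gives the ramification indices $e_p=4$, $e_2=2$, and $e_{q_i}=2$.

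Next I would apply Proposition~\ref{prop 2.4} with $r=2$ and $s=2$; this excludes $2$ from the product. Each $M_{q_i}$ is the unique quadratic subfield of $\mathbb{Q}(\zeta_{q_i})$---namely $\mathbb{Q}(\sqrt{q_i})$ when $q_i\equiv 1\pmod 4$ and $\mathbb{Q}(\sqrt{-q_i})$ when $q_i\equiv 3\pmod 4$. The decisive new input is Proposition~\ref{prop 2.5}: since $\left(\frac{2}{p}\right)=-1$ for $p\equiv 5\pmod 8$, the quartic subfield of $\mathbb{Q}(\zeta_p)$ is $M_p=\mathbb{Q}(\sqrt{-\varepsilon_p\sqrt{p}})$, which is \emph{purely imaginary}. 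Consequently,
$$K_{(*)}=K\bigl(\sqrt{-\varepsilon_p\sqrt{p}},\ \sqrt{\widetilde q_1},\ldots,\sqrt{\widetilde q_n}\bigr),$$
where $\widetilde q_i=q_i$ or $-q_i$ according as $q_i\equiv 1$ or $3\pmod 4$.

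The final step is to extract the real subfield $K^{(*)}=K_{(*)}^{+}$, and here the three cases diverge. In Case~1 the sole imaginary generator over $K$ is $\sqrt{-\varepsilon_p\sqrt{p}}$; with nothing to pair it with, I would simply drop it, obtaining $K^{(*)}=K(\sqrt{q_1},\ldots,\sqrt{q_n})$, which automatically contains the real element $\sqrt{2\varepsilon_p\sqrt{p}}=\sqrt{a\varepsilon_p\sqrt{p}}/\prod_i\sqrt{q_i}$. In Cases~2 and~3 an index $i_0$ with $q_{i_0}\equiv 3\pmod 4$ is available, so I would multiply $\sqrt{-q_{i_0}}$ by each other imaginary generator to produce the real elements $\sqrt{q_{i_0}\varepsilon_p\sqrt{p}}=\sqrt{-q_{i_0}}\cdot\sqrt{-\varepsilon_p\sqrt{p}}$ and $\sqrt{q_{i_0}q_i}=\sqrt{-q_{i_0}}\cdot\sqrt{-q_i}$ (for the remaining $q_i\equiv 3\pmod 4$). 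Combined with $\sqrt{q_i}$ for $q_i\equiv 1\pmod 4$, these are exactly the generators $\{\sqrt{q_i^{*}}\}_{i\neq i_0}$ together with $\sqrt{q_{i_0}\varepsilon_p\sqrt{p}}$ announced in the statement. Since the pairing procedure is insensitive to the residue class of the odd part of $a$ modulo $4$, Cases~2 and~3 structurally return the same field.

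The main (and essentially only) technical obstacle is this last step: one must verify that the real elements produced by pairing with $\sqrt{-q_{i_0}}$ genuinely span $K_{(*)}^{+}$ over $K$ with no hidden redundancies, so that the $\mathbb{F}_2$-rank of $\mathrm{Gal}(K^{(*)}/K)$ is exactly one less than that of $\mathrm{Gal}(K_{(*)}/K)$. Once this bookkeeping is carried out, the proposition follows.
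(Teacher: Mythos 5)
Your proposal is correct and follows exactly the route the paper intends: the paper's proof is literally ``proceed as in Proposition \ref{prop 4.1}'', and you carry out that blueprint with the two required adjustments (the fourth discriminant case of Lemma \ref{lemma 2.1; discriminants} because $a$ even forces $b$ odd, and the fact that $\varepsilon_p^*=-\varepsilon_p$ for $p\equiv 5\pmod 8$ makes $M_p$ imaginary, which is what produces the generator $\sqrt{q_{i_0}\varepsilon_p\sqrt{p}}$ after pairing with $\sqrt{-q_{i_0}}$). The only point worth being explicit about is the independence/degree count at the end, which you correctly flag; it follows from the multiplicative independence of $-\varepsilon_p\sqrt{p}$ and the $\tilde q_i$ modulo $K^{*2}$, exactly as in the paper's model case.
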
	
	\begin{proof}
		We proceed as in the proof of Proposition	\ref{prop 4.1}.
	\end{proof}

	\subsection{The case  $p=2$:}
	\begin{proposition}  Assume that  $p=2$. We have                               
		\begin{enumerate}[\rm 1.]
			\item If $\forall  i\in \{1,...,n\}$, $q_i\equiv 1\pmod 4$, then 
			$$K^{(*)}=K( \sqrt{q_1},\sqrt{q_2},..., \sqrt{q_n}  ).$$
			\item If $a$ is divisible by a prime congruent to $3\pmod 4$ then	
			$$K^{(*)}=K( \{\sqrt{q_i^*}\}_{i\not= i_0}).$$
		\end{enumerate}
		Such that for all $i\not= i_0 $, $q_i^*=q_i$ if $q_i\equiv 1\pmod 4$ and $q_i^*=q_{i_0}q_i$ else.
	\end{proposition}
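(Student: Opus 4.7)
My plan is to repeat the argument of Proposition \ref{prop 4.1} specialized to $p = 2$. First, writing $p = 2 = 1^2 + 1^2$ puts us in case 1 of Lemma \ref{lemma 2.1; discriminants}, so $\delta_K = 2^{11}a^2$. The ramified rational primes in $K$ are thus $2$ and the odd primes $q_1,\ldots,q_n$ dividing $a$. By Proposition \ref{prop 2.5}, $\mathbb{Q}(\sqrt{\varepsilon_2\sqrt{2}}) = \mathbb{Q}(\sqrt{2+\sqrt{2}})$ is the totally ramified quartic subfield of $\mathbb{Q}(\zeta_{16})$, whence $e_2 = 4$; the relative discriminant formula in Lemma \ref{lemma 2.1; discriminants}(2) gives $e_{q_i} = 2$ for each $i$.

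Next I apply Proposition \ref{prop 2.4} with $n = 4 = 2^2$ so that $r = 2$. Since the prime $r = 2$ is explicitly excluded from the product, only the odd ramified primes contribute, and each $M_{q_i}$ is the unique quadratic subfield $\mathbb{Q}(\sqrt{q_i^{**}})$ of $\mathbb{Q}(\zeta_{q_i})$, with $q_i^{**} = q_i$ when $q_i \equiv 1 \pmod 4$ and $q_i^{**} = -q_i$ when $q_i \equiv 3 \pmod 4$. This yields $K_{(*)} = K(\sqrt{q_1^{**}}, \ldots, \sqrt{q_n^{**}})$. Since $K$ is totally real (both archimedean embeddings of $\varepsilon_2 \sqrt{2} = 2 \pm \sqrt{2}$ are positive), we have $K^{(*)} = K_{(*)}^{+}$, so only the descent to the maximal totally real subfield remains.

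For case (1), every $q_i^{**} = q_i$ is real, so $K_{(*)}$ is itself totally real and $K^{(*)} = K(\sqrt{q_1}, \ldots, \sqrt{q_n})$. For case (2), reorder so that $q_1, \ldots, q_r \equiv 1 \pmod 4$ and $q_{i_0} = q_{r+1}, q_{r+2}, \ldots, q_n \equiv 3 \pmod 4$. The identities $\sqrt{-q_{i_0}}\cdot\sqrt{-q_j} = \pm\sqrt{q_{i_0}q_j}$ for $j = r+2, \ldots, n$ let me replace all but one of the imaginary generators by the real generators $\sqrt{q_{i_0} q_j}$. Setting $F := K(\{\sqrt{q_i^*}\}_{i \neq i_0})$ gives a real subfield contained in $K_{(*)} = F(\sqrt{-q_{i_0}})$; since $\sqrt{-q_{i_0}}$ is purely imaginary, $F$ has index $2$ in $K_{(*)}$ and equals $K_{(*)}^{+} = K^{(*)}$.

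The main technical point I anticipate is verifying the independence of the $n-1$ classes $q_i^*$ modulo $(K^{\ast})^2$, which is needed to ensure $[F:K] = 2^{n-1}$ (and hence that the index $[K_{(*)}:F]$ is exactly $2$, so the descent to the real subfield really does drop only one factor). This should parallel the implicit $2$-rank computations in Proposition \ref{prop 4.1}, following from the ambient ambiguous class number formula of Lemma \ref{lemma 4.1} applied to $K/k_0$; no new obstacles beyond those handled there are expected.
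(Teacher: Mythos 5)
Your proposal is correct and follows exactly the route the paper intends: the paper's proof of this proposition is simply ``proceed as in Proposition \ref{prop 4.1},'' and you carry out that template faithfully (discriminant via Lemma \ref{lemma 2.1; discriminants} with $d=2$, ramification indices, Proposition \ref{prop 2.4} with $r=2$ excluding the prime $2$ from the product, and descent to the maximal real subfield by pairing the $\sqrt{-q_j}$ with $\sqrt{-q_{i_0}}$). The only cosmetic remark is that your final worry about multiplicative independence is not actually needed for the identification $F=K_{(*)}^{+}$, since $F$ is totally real and $K_{(*)}=F(\sqrt{-q_{i_0}})$ is not, which already forces $[K_{(*)}:F]=2$.
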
	
	\begin{proof}
		We proceed as in the proof of Proposition	\ref{prop 4.1}.
	\end{proof}
	
	\section{\bf The Hilbert genus field of the real cyclic quartic  fields $K=\mathbb{Q}(\sqrt{a  \varepsilon_p\sqrt{p}})$}\label{sec:5}
	
	Keep the notations in the beginning of the above section. If $p\equiv 1\pmod4$  and $1\leq j\leq m$, we set $$\alpha_j=\left\{ \begin{array}{lcl}
		x_j+y_j\sqrt{p}, &\text{ if }& q_j\equiv 1\pmod 4,\\
		(x_j+y_j\sqrt{p})\sqrt{p}, &\text{ if }& q_j\equiv 3\pmod 4,
	\end{array}\right.$$
	where $x_j$ and $ y_j$ are natural integers satisfying the Diophantine equation:	
	\begin{eqnarray}
		x^2-py^2=q_j^{\lambda h}, 
	\end{eqnarray}
	where $h$ is the class number of  $\mathbb{Q}(\sqrt{p})$ and  $\lambda=\begin{cases}1,& \text{ if } p\equiv 1\pmod 8,\\
		3, & \text{ if }  p\equiv 5\pmod 8.\end{cases}$\\
	
	Note that   $\alpha_j\equiv x_j+y_j\equiv \pm 1 \pmod 4$, for $j=1,...,m$.
	If   there is some $j\in\{1,...,m\}$ such that 
	$\alpha_j\equiv -1\pmod 4$, we set $$j_0=\min \{j: \alpha_j\equiv -1\pmod 4\}.\label{j0}$$

	Now we can start the construction of the Hilbert genus field of the real cyclic quartic  fields $K=\mathbb{Q}(\sqrt{a  \varepsilon_p\sqrt{p}})$.	
	\subsection{\bf The case  $p\equiv 1\pmod 8$:}	
	\begin{theorem}\label{first thm}Suppose that $a$ is odd and $p\equiv 1\pmod 8$.
		\begin{enumerate}[\rm 1.]
			\item Assume that $\forall i\in \{1,...,n\}$  $q_i\equiv 1\pmod 4$, we have:
			\begin{enumerate}[\rm $\bullet$]
				\item If $m=0$, then $$E(K)=K(\sqrt{q_1},..., \sqrt{q_n}) .$$
				\item If $m\geq 1$ and  $\forall j \in\{1,...,m\}$  
				$\left(\dfrac{p}{q_j}\right)_4=\left(\dfrac{q_j}{p}\right)_4$, then 
				$$E(K)=K(\sqrt{q_1},..., \sqrt{q_n}, \sqrt{\alpha_1},...,\sqrt{\alpha_m}) .$$
				
				\item If $m\geq 1$ and $\exists j\{1,...,m\}$, $\left(\dfrac{p}{q_j}\right)_4\not=\left(\dfrac{q_j}{p}\right)_4$, then 
				
				$$E(K)=K(\sqrt{q_1},..., \sqrt{q_n},   \{\sqrt{\alpha_j^*} \}_{j\not= j_0}  ). $$
			\end{enumerate}

			\item Assume that $\exists i\in \{1,...,n\}$  $q_i\equiv 3\pmod 4$ and $a\equiv 1\pmod 4$, we have:
			\begin{enumerate}[\rm $\bullet$]
				\item If $m=0$, then 	$$E(K)=K(\{\sqrt{ q_i^*} \}_{i\not= i_0}  ) $$ 
				\item  If $m\geq 1$ and $\forall j  \in\{1,...,m\}$ $q_j\equiv 1\pmod 4$, we have 
				\begin{enumerate}[\rm $\star$]
					\item  If   $\forall j\in \{1,...,m\}$  
					$\alpha_j \equiv 1\pmod 4$, then  
					$$E(K)=K(\{\sqrt{ q_i^*} \}_{i\not= i_0} ,    \sqrt{\alpha_1},..., \sqrt{\alpha_m}   ). $$ 
					\item   If   $\exists j\in \{1,...,m\}$  
					$\alpha_j \equiv -1\pmod 4$, then  
					$$E(K)=K(\{\sqrt{ q_i^*} \}_{i\not= i_0} ,     \{\sqrt{ \alpha_j^*} \}_{j\not= j_0}, \sqrt{\varepsilon_p}). $$ 
				\end{enumerate}
				\item   If $m\geq 1$ and $\exists j  \in\{1,...,m\}$ $q_j\equiv 3\pmod 4$, we have 
				\begin{enumerate}[\rm $\star$]
					\item  If   $\forall j\in \{1,...,m\}$  
					$\alpha_j \equiv 1\pmod 4$, then  
					
					$$E(K)= 
					K(\{\sqrt{ q_i^*} \}_{i\not= i_0} ,   \sqrt{\alpha_1},...,\sqrt{\alpha_{m-1}}  )  .$$
					\item  If   $\exists j\in \{1,...,m\}$  
					$\alpha_j \equiv -1\pmod 4$, then 
					$$E(K)= 
					K(\{\sqrt{ q_i^*} \}_{i\not= i_0} ,    \{\sqrt{ \alpha_j^*} \}_{j\not= j_0}) . $$
				\end{enumerate}
			\end{enumerate}

			\item Assume that $\exists i\in \{1,...,n\}$  $q_i\equiv 3\pmod 4$ and $a\equiv 3\pmod 4$, we have: 
			\begin{enumerate}[\rm $\bullet$]
				\item  If $m=0$, then	 $$E(K)=K(\sqrt{q_1},..., \sqrt{q_n}) .$$
				\item    If $m\geq 1$ and $\forall j\in \{1,...,m\}$  
				$\alpha_j \equiv 1\pmod 4$, then  
				$$E(K)=K(\sqrt{q_1},..., \sqrt{q_n}, \sqrt{\alpha_1},...,\sqrt{\alpha_m}). $$
				\item   If $m\geq 1$ and  $\exists j\in \{1,...,m\}$  
				$\alpha_j \equiv -1\pmod 4$, then  
				$$E(K)=K(\sqrt{q_1},..., \sqrt{q_n} , \{\sqrt{ \alpha_j^*} \}_{j\not= j_0}, \sqrt{\varepsilon_p}) .$$ 
			\end{enumerate}
			
		\end{enumerate}
		where $\alpha_j^*=\alpha_j$ if $\alpha_j \equiv  1 \pmod 4$  and $\alpha_j^*=\alpha_{j_0}\alpha_j$ else.  
	\end{theorem}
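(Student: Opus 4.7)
The plan is to apply the ambiguous class number formula (Lemma \ref{lemma 4.1}) to the quadratic extension $K/k_0$, with $k_0=\mathbb{Q}(\sqrt p)$, in order to compute $r_2(\mathbf{C}l(K))$ exactly, and then to exhibit that many $\mathbb{F}_2$-independent unramified quadratic extensions $K(\sqrt\mu)/K$, each verified via Proposition \ref{unramifed quad ex}. Since $p\equiv 1\pmod 8$ is prime, $k_0$ has odd class number, so Lemma \ref{lemma 4.1} is available.

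First I would determine the invariant $t$ of Lemma \ref{lemma 4.1}, i.e.\ the number of primes of $k_0$ (finite or infinite) ramified in $K/k_0$. Using Lemma \ref{lemma 2.1; discriminants} together with Lemma \ref{lm 2.2}, and the assumption that $a$ is odd, the relative discriminant $\delta_{K/k_0}$ is essentially $a\sqrt p$, so the ramified finite primes of $k_0$ are the unique prime above $p$ together with the primes lying above each $q_i\mid a$; infinite ramification is read off the signature of $K$. I would then compute the unit index $2^e=[E_{k_0}:E_{k_0}\cap N_{K/k_0}(K^*)]$ from $N_{K/k_0}(\sqrt\delta)=-a\varepsilon_p\sqrt p$ together with the appropriate local Hilbert symbols at the ramified primes. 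This yields a closed formula for $r_2(\mathbf{C}l(K))$ in each of the three cases of the theorem.

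Next I would produce candidate elements $\mu$. The generators of the genus field $K^{(*)}$ from Proposition \ref{prop 4.1}, namely the $\sqrt{q_i}$ (or $\sqrt{q_i^*}$), supply part of them. The crucial new contribution is, for each $j\in\{1,\dots,m\}$, the element $\alpha_j$ built from a solution of $x_j^2-p y_j^2=q_j^{\lambda h}$: the prime $q_j$ splits in $k_0$ as $\mathfrak q_j\bar{\mathfrak q}_j$, the ideal $\mathfrak q_j^{\lambda h}$ is principal, and since $\mathfrak q_j$ ramifies in $K/k_0$ the ideal $(\alpha_j)\mathcal{O}_K$ is automatically a square, so condition~1 of Proposition \ref{unramifed quad ex} holds. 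Condition~2 then splits according to $\alpha_j\equiv \pm 1\pmod 4$; when some $\alpha_j\equiv -1\pmod 4$ I replace $\alpha_j$ by $\alpha_j^* = \alpha_{j_0}\alpha_j$ (for $j\neq j_0$) in order to restore a square residue modulo~$4$, and in the same situation the unit $\varepsilon_p$ may contribute an extra independent $\sqrt{\varepsilon_p}$ whenever $\varepsilon_p\equiv \xi^2\pmod 4$ in $\mathcal{O}_{k_0}$. Proposition \ref{lm symb buqia} is what translates the congruence $\alpha_j\equiv 1\pmod 4$ into the biquadratic-symbol equality $\left(\frac{p}{q_j}\right)_4=\left(\frac{q_j}{p}\right)_4$ used in the hypothesis of item~1.

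Finally I would check $\mathbb{F}_2$-independence of the exhibited $\sqrt\mu$'s, either by examining the primes that ramify in each $K(\sqrt\mu)/K$ over $K/k_0$, or by a direct Kummer-theoretic argument on $\mathcal{O}_K^\times/\mathcal{O}_K^{\times 2}$; matching the count against $r_2(\mathbf{C}l(K))$ would then force equality $K(\sqrt{\Delta(K)})=E(K)$. The main obstacle will be the bookkeeping across cases: in item~2, where $a\equiv 1\pmod 4$ but $a$ has at least one prime divisor $q_i\equiv 3\pmod 4$, the interaction between the genus-field correction $q_i\mapsto q_i^*$ and the unit correction $\alpha_j\mapsto \alpha_j^*$ has to be coordinated, and the precise appearance or disappearance of $\sqrt{\varepsilon_p}$ must be matched against the unit index $e$ computed in that particular sub-sub-case.
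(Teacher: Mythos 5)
Your plan follows the same two-step architecture as the paper's proof of Theorem \ref{first thm}: pin down $r_2(\Delta(K)/{K^*}^2)$ via Lemma \ref{lemma 4.1}, then exhibit that many multiplicatively independent elements $\mu$ with $K(\sqrt{\mu})/K$ unramified --- the $q_i$ or $q_i^*$ coming from Proposition \ref{prop 4.1}, the $\alpha_j$ or $\alpha_j^*$ checked with Proposition \ref{unramifed quad ex} (the ideal $(\alpha_j)$ being a square because $\mathfrak q_j$ ramifies in $K/k_0$ and $\lambda h$ is odd), and $\sqrt{\varepsilon_p}$ when it is needed --- with Proposition \ref{lm symb buqia} converting the congruence $\alpha_j\equiv\pm1\pmod 4$ into the biquadratic-symbol hypotheses. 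The one genuine divergence is the source of the rank: the paper never computes $t$ or $e$ itself, but imports the values $n$, $n+m$, $n+m-1$, $n-1$, $n+m-2$ of $r_2(\mathbf{C}l(K))$ case by case from \cite{aziziTmmZekhII}, whereas you propose to recompute the unit index $2^{e}=[E_{k_0}:E_{k_0}\cap N_{K/k_0}(K^*)]$ from local norm-residue symbols. That makes the argument more self-contained, but it relocates essentially all of the content of the cited reference into your proof: the Hilbert-symbol computations for $-1$ and $\varepsilon_p$ at the ramified primes are precisely what generate the case splits on $\left(\frac{p}{q_j}\right)_4$ versus $\left(\frac{q_j}{p}\right)_4$ and decide whether $\sqrt{\varepsilon_p}$ appears, and your proposal leaves them as bookkeeping to be done. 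A second, minor, difference: for independence modulo ${K^*}^2$ the paper uses the concrete device of applying $N_{k_0/\mathbb{Q}}$ to a putative square $\beta$ (or to $\delta\beta$) and observing that $\prod_j q_j^{b_j}$, respectively $-p\prod_j q_j^{b_j}$, cannot be a rational square; this is more elementary than the ramification- or Kummer-theoretic tests you sketch. Neither difference is a flaw in the approach, but as written the quantitative heart of the theorem (the value of $e$ in each sub-case) is deferred rather than established.
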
	
	\begin{proof}
		We shall prove the theorem by following the order of  the  cases and  the results given therein. 	
		\begin{enumerate}[\rm 1.]
			\item 
			
			\begin{enumerate}[\rm $\bullet$]
				\item Note that by Lemma \ref{lemma 4.1}  and  \cite[Theorem 4.6]{aziziTmmZekhII}, we have $r_2(\Delta(K)/{K^*}^2)=n$.
				Since 	$\beta= \displaystyle\prod_{i=1}^n {q_i^*}^{a_i} $ is not a square in $K$ for all $a_i\in\{0,1\}$ not all zero,
				the set	$\{q_1,...,q_n\}$ is linearly independent modulo $(K^*)^2$ (provided that the notion
				of linear independence is translated to a multiplicative setting: $\gamma_1,..., \gamma_s$ are multiplicatively independent if $\gamma_1^{m_1}... \gamma_s^{m_s}= 1$ implies that
				$m_i=0$, for all $i$).
				By	Proposition \ref{prop 4.1}, the extension $K(\sqrt{q_i})/K$ is unramified for all $i=1,...,n$. Therefore, $\{q_1,...,q_n\}$ is a generator set of $\Delta(K)$. So the result by the definition of $E(K)$ (cf. Section \ref{sec:1}).
				
				\item  By Lemma \ref{lemma 4.1} and  \cite[Theorem 4.6]{aziziTmmZekhII}, we have $r_2(\Delta(K)/{K^*}^2)=n+m$. Consider the following set
				$$\eta =\{q_1,...,q_n, \alpha_1,...,\alpha_m \}.$$
				Let us show that $\eta$ is linearly independent modulo $(K^*)^2$.\\
				Let   $\beta =\left(\displaystyle\prod_{i=1}^n {q_i^*}^{a_i}\right)\left(\displaystyle\prod_{j=1}^m {\alpha_j^*}^{b_j}\right)$, where $a_i, b_j \in \{0,1\}$ and are not all zero. Assume that $\beta \in {K^*}^2 .$ Thus, $ \beta\in k_0^2$ or $\delta  \beta\in k_0^2$, where 
				$k_0=\mathbb{Q}(\sqrt{p})$ and $\delta=a\varepsilon_p\sqrt{p}$. Note that there is some $j$ such that $b_j\not= 0$ (in fact, if else, we get $\left(\displaystyle\prod_{i=1}^n {q_i^*}^{a_i}\right)$ is a square in $K$, which is not true).

				If $  \beta\in k_0^2$, then $N(\beta):=N_{k_0/\mathbb{Q}}(\beta) \in \mathbb{Q}^2$. Since 
				$N(\alpha_j)=q_jz_j^2$, with $z_j=q_j^{ \frac{h-1}{2}}$ ($\frac{h-1}{2}$ is even).
				So one can verify that $N(\beta)=\left(\prod_{j=1}^m {q_j^*}^{b_j}\right)Z^2\in \mathbb{Q}^2$, for some $Z \in \mathbb{Q}$. This implies that $\left(\prod_{j=1}^m {q_j^*}^{b_j}\right)$ is a square in
				$\mathbb{Q}$, which is impossible.

				If $  \delta  \beta\in k_0^2$, as above one gets $-p\left(\prod_{j=1}^m {q_j^*}^{b_j}\right)$ is a square in
				$\mathbb{Q}$. Which is also impossible. Therefore, the elements of $\eta$ are 
				linearly independents modulo $(K^*)^2$.

				On the other hand, by Proposition \ref{prop 4.1}, $K(\sqrt{q_i})/K$ is an unramified extension for all $i$. 
				
				Since $\forall j \in\{1,...,m\}$  
				$\left(\dfrac{p}{q_j}\right)_4=\left(\dfrac{q_j}{p}\right)_4$, $\alpha_j\equiv 1\pmod 4$. As the primes $q_j$ ramify in $K/k_0$, then the ideal $(\alpha_j)$,  generated by $\alpha_j$ is a square of an ideal of $K$. So by Proposition \ref{unramifed quad ex}, $K(\sqrt{\alpha_j}) /K$ is unramified for all $j=1,...,m$. Therefore, 
				$\eta$ is a generator set of the group quotient $\Delta(K)/ {K^*}^2$. Hence 
				$$E(K)=K(\sqrt{q_1},..., \sqrt{q_n}, \sqrt{\alpha_1},...,\sqrt{\alpha_m}) .$$   
				
				\item  If $\exists j\in \{1,...,m\}$, $\left(\dfrac{p}{q_j}\right)_4\not=\left(\dfrac{q_j}{p}\right)_4$, then by Lemma \ref{lemma 4.1} and  \cite[Theorem 4.6]{aziziTmmZekhII}, we have $r_2(\Delta(K)/{K^*}^2)=n+m-1$. Consider the following set
				$$\eta =\{\sqrt{q_1},..., \sqrt{q_n}\}\cup   \{\sqrt{\alpha_j^*} \}_{j\not= j_0}  .$$
				Noting that $N(\alpha_j^*)=  \left\{ \begin{array}{l}
					q_jz_j^2  \text{ if } \alpha_j\equiv 1\pmod 4\\
					q_jq_{j_0}( z_{j_0}z_j)^2{p}^2 \text{ if } \alpha_j\equiv 3\pmod 4;
				\end{array}\right.$
				we show, as above,  that $\eta$ is linearly independent modulo   $(K^*)^2$ and therefore
				$$E(K)=K(\sqrt{q_1},..., \sqrt{q_n},   \{\sqrt{\alpha_j^*} \}_{j\not= j_0}  ) .$$
			\end{enumerate}

			\item  
			\begin{enumerate}[\rm $\bullet$]
				\item By  Lemma \ref{lemma 4.1} and \cite[Theorems 4.9 and 4.11]{aziziTmmZekhII}, we have $r_2(\Delta(K)/{K^*}^2)=n-1$. Therefore, as in the first item we get the result of the point.
				\item   Assume that $\exists i\in \{1,...,n\}$  $q_i\equiv 3\pmod 4$ and $a\equiv 1\pmod 4$, we have:
				\begin{enumerate}[\rm $\star$]
					\item   As above we show that the set  $  \{\sqrt{ q_i^*} \}_{i\not= i_0}\cup \{\sqrt{\alpha_1},..., \sqrt{\alpha_m}   \}$ is a generator set of $\Delta(K)/{K^*}^2$. Therefore, $$E(K)=K(\{\sqrt{ q_i^*} \}_{i\not= i_0} ,    \sqrt{\alpha_1},..., \sqrt{\alpha_m}   ). $$
					\item  In this case,  Lemma \ref{lemma 4.1} and  \cite[Theorem 4.11]{aziziTmmZekhII}, we have $r_2(\Delta(K)/{K^*}^2)=n+m-1$. We shall consider the following set :
					$$\eta = \{q_i^* \}_{i\not= i_0}\cup   \{\alpha_j^* \}_{j\not= j_0}\cup \{\varepsilon_p  \}.$$
					Let us show that $\eta$ is linearly independent modulo   $(K^*)^2$.\\
					Let   $\beta =\left(\displaystyle\prod_{i=1}^n {q_i^*}^{a_i}\right)\left(\displaystyle\prod_{j=1}^m {\alpha_j^*}^{b_j}\right)\varepsilon_p^c$, where $a_i, b_j, c \in \{0,1\}$ and are not all zero. Assume that $\beta \in {K^*}^2 .$ We have,  $N(\beta)=\left(\pm (p)^{\ell_1}(q_1)^{\ell_2}\prod_{j=1}^m {\alpha_j^*}^{b_j}\right)\cdot Z^2 \in \mathbb{Q}^2$, which is impossible. We similarly show that $N(\delta\beta)$ can not be in $\mathbb{Q}^2$. 
					Therefore, $\eta$ is linearly independent modulo   $(K^*)^2$. 
					
					As above we easily check that $K(\sqrt{q_i^*})/K$ $(1\leq i \leq n)$ and $K(\sqrt{\alpha_j^*})/K$ $(1\leq j \leq m)$ are unramified extensions.  By \cite[p. 67]{cohn}  that the extension  $K(\sqrt{\varepsilon_p})/K$ is    unramified. Therefore, the set $\eta$ is linearly independent modulo   $(K^*)^2$. Hence 
					$$E(K)=K(\{\sqrt{ q_i^*} \}_{i\not= i_0} ,     \{\sqrt{ \alpha_j^*} \}_{j\not= j_0}, \sqrt{\varepsilon_p}). $$ 
					
				\end{enumerate}
				\item  In this case, by Lemma \ref{lemma 4.1} and  \cite[Theorems 4.9 and 4.11]{aziziTmmZekhII}
				$r_2(\Delta(K)/{K^*}^2)=n+m-2$.  
				\begin{enumerate}[\rm $\star$]
					\item   If   $\forall j\in \{1,...,m\}$  
					$\alpha_j\equiv1\pmod 4$,  then, as above we show that 
					$\eta=\{\sqrt{ q_i^*} \}_{i\not= i_0}\cup \{\sqrt{\alpha_1},...,\sqrt{\alpha_{m-1}}\}$ is a generator set of $\Delta(K)/{K^*}^2$.
					\item If   $\exists j\in \{1,...,m\}$  
					$\alpha_j\equiv1-\pmod 4$, then, as above we show that 
					$\eta=\{\sqrt{ q_i^*} \}_{i\not= i_0} \cup    \{\sqrt{ \alpha_j^*} \}_{j\not= j_0}$ is a generator set of $\Delta(K)/{K^*}^2$.
				\end{enumerate}

			\end{enumerate}

			\item 
			\begin{enumerate}[\rm $\bullet$]
				\item $m=0$, the result is direct.
				\item     If 	$m\geq 1$  and $\forall j\in \{1,...,m\}$  
				$\alpha\equiv 1 \pmod 4$, 
				then, as above we show that $$\eta=\{\sqrt{ q_i^*} \}_{i\not= i_0} \cup \{     \sqrt{\alpha_1},..., \sqrt{\alpha_m},\sqrt{\varepsilon_p}  \}$$ is a generator set of $\Delta(K)/{K^*}^2$ (use Lemma \ref{lemma 4.1} and \cite[Theorems 4.8 and 4.10]{aziziTmmZekhII}).  
				
				\item   In the last case,   we show that $\eta= \{\sqrt{ q_i^*} \}_{i\not= i_0}\cup \{ \sqrt{\alpha_1},..., \sqrt{\alpha_m},\sqrt{\varepsilon_p}  \}$ is a generator set of $\Delta(K)/{K^*}^2$ (use Lemma \ref{lemma 4.1} and \cite[Theorems 4.8 and 4.10]{aziziTmmZekhII}).  
				
			\end{enumerate}
			
		\end{enumerate}	
		Which completes the proof.	
	\end{proof}

	\begin{theorem} 		Suppose that $a$ is even and $p\equiv 1\pmod 8$.
		\begin{enumerate}[\rm 1.]
			\item Assume that $\forall \in \{1,...,n\}$  $q_i\equiv 1\pmod 4$, we have:
			\begin{enumerate}[\rm $\bullet$]
				\item If $m=0$ and   $\left(\dfrac{2}{p}\right)_4=(-1)^{\frac{p-1}{8}}$, then 
				$$E(K)=K(\sqrt{2},\sqrt{q_1},..., \sqrt{q_n},\sqrt{\varepsilon_p}). $$
				
				\item If $m=0$ and  $\left(\dfrac{2}{p}\right)_4\not=(-1)^{\frac{p-1}{8}}$, then 
				$$E(K)=K(\sqrt{2},\sqrt{q_1},..., \sqrt{q_n} ). $$

				\item If $m\geq1$, $\forall j \in\{1,...,m\}$  
				$\left(\dfrac{p}{q_j}\right)_4=\left(\dfrac{q_j}{p}\right)_4$ and $\left(\dfrac{2}{p}\right)_4=\left(\dfrac{p}{2}\right)_4$, then 
				$$E(K)=K(\sqrt{2},\sqrt{q_1},..., \sqrt{q_n}, \sqrt{\alpha_1},...,\sqrt{\alpha_m},\sqrt{\varepsilon_p}). $$
				
				\item If $m\geq1$, $\forall j\{1,...,m\}$, $\left(\dfrac{p}{q_j}\right)_4=\left(\dfrac{q_j}{p}\right)_4$ and $\left(\dfrac{2}{p}\right)_4\not=\left(\dfrac{p}{2}\right)_4$, then 
				$$E(K)=K(\sqrt{2},\sqrt{q_1},..., \sqrt{q_n}, \sqrt{\alpha_1},...,\sqrt{\alpha_m} ). $$
				\item If $m\geq1$, $\exists j\{1,...,m\}$, $\left(\dfrac{p}{q_j}\right)_4\not=\left(\dfrac{q_j}{p}\right)_4$  then 
				$$E(K)=K(\sqrt{2},\sqrt{q_1},..., \sqrt{q_n},   \{\sqrt{\alpha_j^*} \}_{j\not= j_0},\sqrt{\varepsilon_p}  ). $$
				
			\end{enumerate}

			\item Assume that $\exists i\in \{1,...,n\}$  $q_i\equiv 3\pmod 4$ and $a\equiv 1\pmod 4$, we have:
			\begin{enumerate}[\rm $\bullet$]
				\item If $m=0$, then 
				$$E(K)= 
				K(\sqrt{2}, \{\sqrt{ q_i^*} \}_{i\not= i_0},\sqrt{ \varepsilon_p}   ).  $$

				\item  If $m\geq1$ and  $\forall j  \in\{1,...,m\}$ $q_j\equiv 1\pmod 4$, we have 
				\begin{enumerate}[\rm $\star$]
					\item  If   $\forall j\in \{1,...,m\}$  
					$\alpha_j\equiv1\pmod 4$, then  
					$$E(K)=K(\sqrt{2},\{\sqrt{ q_i^*} \}_{i\not= i_0} ,    \sqrt{\alpha_1},..., \sqrt{\alpha_m}, \sqrt{\varepsilon_p}  ). $$ 
					\item   If   $\exists j\in \{1,...,m\}$  
					$\alpha_j\equiv-1\pmod 4$, then  
					$$E(K)=K(\sqrt{2},\{\sqrt{ q_i^*} \}_{i\not= i_0} ,   \sqrt{\beta_1},..., \sqrt{\beta_m},, \sqrt{\varepsilon_p}), $$ 
					where $\beta_j=\alpha_j$ if $\alpha_j\equiv 1\pmod 4$  and $\beta_j=q_{i_0}\alpha_j$ elsewhere.
				\end{enumerate}
				\item   If  $m\geq1$ and $\exists j  \in\{1,...,m\}$ $q_j\equiv 3\pmod 4$, we have 
				\begin{enumerate}[\rm $\star$]
					\item   If   $\forall j\in \{1,...,m\}$  
					$\alpha_j\equiv1\pmod 4$, then  
					
					$$E(K)= 
					K(\sqrt{2},\{\sqrt{ q_i^*} \}_{i\not= i_0} ,   \sqrt{\alpha_1},...,\sqrt{\alpha_{m-1}}  ) . $$
					\item   If   $\exists j\in \{1,...,m\}$  
					$\alpha_j\equiv-1\pmod 4$, then 
					$$E(K)= 
					K(\sqrt{2},\{\sqrt{ q_i^*} \}_{i\not= i_0} ,    \{\sqrt{ \alpha_j^*} \}_{j\not= j_0}, \sqrt{\varepsilon_p}) . $$
				\end{enumerate}
			\end{enumerate}

			\item Assume that $\exists i\in\{1,...,n\}$  $q_i\equiv 3\pmod 4$ and $a\equiv 3\pmod 4$, we have: 
			\begin{enumerate}[\rm $\bullet$]
				\item  If $m=0$, then
				$$E(K)=K(\{\sqrt{ q_i^*} \}_{i\not= i_0} ,   \sqrt{ {\varepsilon_p}\sqrt{p}}  ). $$
				\item If  $m\geq1$ and  $\forall j\in \{1,...,m\}$  
				$\alpha_j\equiv1\pmod 4$, then  
				$$E(K)=K(\{\sqrt{ q_i^*} \}_{i\not= i_0} ,    \sqrt{\alpha_1},..., \sqrt{\alpha_m},\sqrt{\varepsilon_p\sqrt{p}}  ). $$ 
				\item If  $m\geq1$ and   $\exists j\in \{1,...,m\}$  
				$\alpha_j\equiv-1\pmod 4$, then  
				$$E(K)=K(\{\sqrt{ q_i^*} \}_{i\not= i_0} ,     \{\sqrt{ \alpha_j^*} \}_{j\not= j_0}, \sqrt{\varepsilon_p},\sqrt{\varepsilon_p\sqrt{p}} ). $$ 
			\end{enumerate}
			
		\end{enumerate}
		where $\alpha_j^*=\alpha_j$ if $\alpha_j \equiv  1 \pmod 4$  and $\alpha_j^*=\alpha_{j_0}\alpha_j$ else.
	\end{theorem}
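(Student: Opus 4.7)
The plan is to follow the pattern established in the proof of Theorem \ref{first thm}: for each case and sub-case, (a) compute $r_2(\Delta(K)/{K^*}^2)$ via Lemma \ref{lemma 4.1} together with the relevant $2$-rank formulas of \cite{aziziTmmZekhII}, (b) exhibit an explicit candidate generating set whose cardinality matches this rank, (c) prove that its elements are linearly independent modulo $(K^*)^2$ by taking norms to $k_0$ and then to $\mathbb{Q}$, and (d) verify that each generator $\sqrt{\gamma}$ produces an unramified quadratic extension of $K$ via Proposition \ref{unramifed quad ex}.

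In step (a) the essential difference from Theorem \ref{first thm} is that $a$ is now even, so the primes of $k_0$ above $2$ enter the count $t$ of primes ramified in $K/k_0$ (here $p\equiv 1\pmod 8$ forces $2$ to split in $k_0$). The value of $e$ in $2^{e}=[E_{k_0}:E_{k_0}\cap N_{K/k_0}(K^{*})]$ is controlled by which of $-1$, $\varepsilon_p$, $-\varepsilon_p$ are norms from $K$, and this is detected precisely by the rational biquadratic residue symbols $\left(\frac{2}{p}\right)_4$ versus $(-1)^{(p-1)/8}$, $\left(\frac{2}{p}\right)_4$ versus $\left(\frac{p}{2}\right)_4$, and $\left(\frac{p}{q_j}\right)_4$ versus $\left(\frac{q_j}{p}\right)_4$; Proposition \ref{lm symb buqia} translates the latter into the congruence conditions on $\alpha_j$ appearing in the statement. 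The three main cases correspond to the parity of the $q_i$ and the residue of $a$ modulo $4$, while the sub-cases isolate the values of $e$.

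In step (c), for a relation $\beta=2^{a_0}\prod q_i^{*\,a_i}\prod\alpha_j^{*\,b_j}\varepsilon_p^{\,c}(\varepsilon_p\sqrt{p})^{\,d}\in{K^{*}}^{2}$, one observes that either $\beta\in k_0^{2}$ or $\delta\beta\in k_0^{2}$. Taking $N_{k_0/\mathbb{Q}}$ and using $N(\alpha_j^{*})\in\{q_jz_j^{2},\,q_jq_{j_0}p^{2}(z_jz_{j_0})^{2}\}$, $N(\varepsilon_p)=\pm 1$ and $N(\varepsilon_p\sqrt{p})=\mp p$ reduces the claim to the impossibility of $\pm p^{\ell}\cdot 2^{a_0}\prod q_i^{a_i}\prod q_j^{b_j}$ being a rational square, which is immediate from unique factorization. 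For step (d), the extensions $K(\sqrt{q_i^{*}})/K$, $K(\sqrt{\alpha_j^{*}})/K$ and $K(\sqrt{\varepsilon_p})/K$ are handled exactly as in Theorem \ref{first thm}; the genuinely new generators are $\sqrt{2}$ and, when $a\equiv 3\pmod 4$, $\sqrt{\varepsilon_p\sqrt{p}}$.

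The main obstacle will be verifying the congruence hypothesis of Proposition \ref{unramifed quad ex} for these two new generators. For $K(\sqrt{2})/K$, one factors $2=\pi\bar{\pi}$ in $\mathcal{O}_{k_0}$ (using $p\equiv 1\pmod 8$) and exploits the biquadratic character of $2$ modulo $p$ recorded in the hypothesis $\left(\frac{2}{p}\right)_4=(-1)^{(p-1)/8}$ or $\left(\frac{2}{p}\right)_4=\left(\frac{p}{2}\right)_4$ to produce $\xi\in\mathcal{O}_K$ with $2\equiv\xi^{2}\pmod{4}$. For $K(\sqrt{\varepsilon_p\sqrt{p}})/K$, one appeals to Proposition \ref{prop 2.5}: $\mathbb{Q}(\sqrt{\varepsilon_p\sqrt{p}})$ is the quartic subfield of $\mathbb{Q}(\zeta_p)$, hence unramified outside $p$, and together with $a\equiv 3\pmod 4$ and $p\equiv 1\pmod 8$ this yields the required square-congruence at the primes of $K$ above $2$. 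Once these two ingredients are in place, in each sub-case the computed $2$-rank of $\Delta(K)/{K^*}^{2}$ matches the size of the exhibited linearly independent set of unramified generators, which forces the stated equality $E(K)=K(\sqrt{\eta})$.
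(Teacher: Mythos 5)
Your overall strategy is exactly the one the paper intends: the authors give no detailed argument for this theorem, saying only that it follows the pattern of Theorem \ref{first thm} with the $2$-rank formulas of \cite{aziziTmmZekhII}, and your four-step scheme (rank count via Lemma \ref{lemma 4.1}, explicit candidate set, independence via norms to $k_0$ and to $\mathbb{Q}$, unramifiedness via Proposition \ref{unramifed quad ex}) is precisely that pattern, correctly adapted to the fact that the primes of $k_0$ above $2$ now ramify in $K/k_0$.

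One step of your plan would fail as written: you propose to verify the hypothesis of Proposition \ref{unramifed quad ex} for the generator $2$, i.e.\ to produce $\xi$ with $2\equiv\xi^{2}\pmod{4}$. That proposition explicitly requires the element to be coprime to $2$, and no such congruence can hold (for $\xi$ prime to $2$ the class of $\xi^{2}$ mod $4$ is a unit, while $2$ is not; for $\xi$ divisible by a prime above $2$ the valuations do not match either). The correct route, and the one implicit in the paper, is that $\sqrt{2}$ already lies in the genus field: the second proposition of \S\ref{sec:4} shows $K(\sqrt{2})\subseteq K^{(*)}\subseteq E(K)$ when $a$ is even and $p\equiv 1\pmod 8$, so no local computation at $2$ is needed for this generator. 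A similar shortcut disposes of $\sqrt{\varepsilon_p\sqrt{p}}$ in case 3: since $\delta=a\varepsilon_p\sqrt{p}$ is a square in $K$, one has $\varepsilon_p\sqrt{p}\equiv a\pmod{{K^{*}}^{2}}$, so $K(\sqrt{\varepsilon_p\sqrt{p}})=K(\sqrt{a})$ and its unramifiedness is again read off from the genus-field proposition rather than from Proposition \ref{prop 2.5}. With these two repairs the remaining ingredients of your outline (the norm computation for independence, the identification of $e$ through the biquadratic symbols, and the congruence conditions on the $\alpha_j$ via Proposition \ref{lm symb buqia}) match the paper's method and, carried out case by case, yield the stated generating sets.
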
	
	
	\begin{proof}
		The detailed proof of this theorem is long, however, inspired by the proof of Theorem \ref{first thm} and  using the same references therein,  a patient reader can  prove this theorem. 
	\end{proof}

	\subsection{The case  $p\equiv 5\pmod 8$:}

	\begin{theorem}Suppose that $a$ is odd and $p\equiv 5\pmod 8$.
		\begin{enumerate}[\rm 1.]
			
			\item Assume that $\forall i\in \{1,...,n\}$  $q_i\equiv 1\pmod 4$, we have:
			\begin{enumerate}[\rm $\bullet$]
				\item If $m=0$, then
				$$E(K)=K(\sqrt{q_1},..., \sqrt{q_n} ). $$
				\item If  $m\geq1$ and $\forall j \in\{1,...,m\}$, $\left(\dfrac{p}{q_j}\right)_4=\left(\dfrac{q_j}{p}\right)_4$, then 
				$$E(K)=K(\sqrt{q_1},..., \sqrt{q_n}, \sqrt{\alpha_1},...,\sqrt{\alpha_m}). $$
				
				\item If  $m\geq1$ and $\exists j\{1,...,m\}$, $\left(\dfrac{p}{q_j}\right)_4\not=\left(\dfrac{q_j}{p}\right)_4$, then 
				$$E(K)=K(\sqrt{q_1},..., \sqrt{q_n},   \{\sqrt{\alpha_j^*} \}_{j\not= j_0},\sqrt{\varepsilon_p}  ). $$
			\end{enumerate}

			\item Assume that $\exists i\in \{1,...,n\}$  $q_i\equiv 3\pmod 4$ and $a\equiv 1\pmod 4$, we have:		
			\begin{enumerate}[\rm $\bullet$]
				\item If $m=0$, then
				$$E(K)=K( \{\sqrt{q_i^*}\}_{i\not= i_0}, \sqrt{{ q_{i_0}{\varepsilon}}_p\sqrt{p}}  ).$$
				\item  If  $m\geq1$ and $\forall j  \in\{1,...,m\}$ $q_j\equiv 1\pmod 4$, we have 
				\begin{enumerate}[\rm $\star$]
					\item  If   $\forall j\in \{1,...,m\}$  
					$\alpha_j\equiv1\pmod 4$, then  
					$$E(K)=K(\{\sqrt{ q_i^*} \}_{i\not= i_0} ,    \sqrt{\alpha_1},..., \sqrt{\alpha_m}, \sqrt{q_{i_0}\varepsilon_p\sqrt{p}}   ). $$ 
					\item   If   $\exists j\in \{1,...,m\}$  
					$\alpha_j\equiv-1\pmod 4$, then  
					$$E(K)=K(\{\sqrt{ q_i^*} \}_{i\not= i_0} ,     \{\sqrt{ \alpha_j^*} \}_{j\not= j_0}, \sqrt{\varepsilon_p},\sqrt{q_{i_0}\varepsilon_p\sqrt{p}}). $$ 
				\end{enumerate}
				\item  If  $m\geq1$ and $\exists j  \in\{1,...,m\}$ $q_j\equiv 3\pmod 4$, we have 
				\begin{enumerate}[\rm $\star$]
					\item  If   $\forall j\in \{1,...,m\}$  
					$\alpha_j\equiv1\pmod 4$, then  
					
					$$E(K)= 
					K(\{\sqrt{ q_i^*} \}_{i\not= i_0} ,   \sqrt{\alpha_1},...,\sqrt{\alpha_{m-1}},\sqrt{q_{i_0}\varepsilon_p\sqrt{p}}  ).  $$
					\item  If   $\exists j\in \{1,...,m\}$  
					$\alpha_j\equiv-1\pmod 4$, then 
					$$E(K)= 
					K(\{\sqrt{ q_i^*} \}_{i\not= i_0} ,    \{\sqrt{ \alpha_j^*} \}_{j\not= j_0},\sqrt{q_{i_0}\varepsilon_p\sqrt{p}}).  $$
				\end{enumerate}
			\end{enumerate}

			\item Assume that $\exists i\in \{1,...,n\}$  $q_i\equiv 3\pmod 4$ and $a\equiv 3\pmod 4$, we have:
			\begin{enumerate}[\rm $\bullet$]
				\item If $m=0$, then
				$$E(K)=K( \{\sqrt{q_i^*}\}_{i\not= i_0})$$
				\item If  $m\geq1$ and  $\forall j\in \{1,...,m\}$, $q_j\equiv 1\pmod 4$, we have
				\begin{enumerate}[\rm $\star$]
					\item    If  $\forall j\in \{1,...,m\}$  
					$\alpha_j\equiv1\pmod 4$, then  
					$$E(K)=K(\{\sqrt{ q_i^*} \}_{i\not= i_0} ,    \sqrt{\alpha_1},..., \sqrt{\alpha_{m}}  ). $$ 
					\item   If   $\exists j\in \{1,...,m\}$  
					$\alpha_j\equiv-1\pmod 4$, then  
					$$E(K)=K(\{\sqrt{ q_i^*} \}_{i\not= i_0} ,     \{\sqrt{ \alpha_j^*} \}_{j\not= j_0},\sqrt{\varepsilon_p}). $$ 
				\end{enumerate}
				\item If  $m\geq1$ and   $\exists j\in \{1,...,m\}$, $q_j\equiv 3\pmod 4$, we have
				\begin{enumerate}[\rm $\star$]
					\item    If  $\forall j\in \{1,...,m\}$  
					$\alpha_j\equiv1\pmod 4$, then  
					$$E(K)=K(\{\sqrt{ q_i^*} \}_{i\not= i_0} ,    \sqrt{\alpha_1},..., \sqrt{\alpha_{m-1}}  ). $$ 
					\item   If   $\exists j\in \{1,...,m\}$  
					$\alpha_j\equiv-1\pmod 4$, then  
					$$E(K)=K(\{\sqrt{ q_i^*} \}_{i\not= i_0} ,     \{\sqrt{ \alpha_j^*} \}_{j\not= j_0}). $$ 
				\end{enumerate}
			\end{enumerate}	
		\end{enumerate}
		where $\alpha_j^*=\alpha_j$ if $\alpha_j \equiv  1 \pmod 4$   and $\alpha_j^*=\alpha_{j_0}\alpha_j$.
	\end{theorem}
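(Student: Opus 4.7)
The plan is to follow the template established by Theorem \ref{first thm}, adapting it to $p\equiv 5\pmod 8$, where $\lambda=3$ in the defining Diophantine equation for the $\alpha_j$. For each of the nine sub-cases I would proceed in four steps: $(i)$ compute the $2$-rank $r_2(\Delta(K)/{K^*}^2)$ from Lemma \ref{lemma 4.1} combined with the appropriate ambiguous class number formulas from \cite[Theorems 4.6--4.11]{aziziTmmZekhII}; $(ii)$ exhibit a candidate subset $\eta\subset K^*$ whose cardinality matches this rank; $(iii)$ show that $\eta$ is multiplicatively independent modulo $(K^*)^2$; $(iv)$ verify via Proposition \ref{unramifed quad ex} that each $K(\sqrt{\gamma})/K$ with $\gamma\in\eta$ is unramified. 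The conclusion $E(K)=K(\sqrt{\eta})$ then follows directly from the definition of the Hilbert genus field recalled in Section \ref{sec:1}.

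For the independence step $(iii)$, I would mimic the norm argument from Theorem \ref{first thm}: suppose a nontrivial product $\beta=\prod_i(q_i^*)^{a_i}\prod_j(\alpha_j^*)^{b_j}\varepsilon_p^{c}(q_{i_0}\varepsilon_p\sqrt{p})^{d}$ lies in $(K^*)^2$. The quartic Galois structure of $K/\mathbb{Q}$ forces either $\beta\in k_0^2$ or $\delta\beta\in k_0^2$, where $\delta=a\varepsilon_p\sqrt{p}$. Applying $N_{k_0/\mathbb{Q}}$ and using $N(\alpha_j)=q_j^{\lambda h}=q_j\cdot (q_j^{(\lambda h-1)/2})^2$, $N(\varepsilon_p)=-1$, together with $N(q_{i_0}\varepsilon_p\sqrt{p})=-p q_{i_0}^2$, one reaches a rational identity of the form $\pm p^{\ell_1}q_{i_0}^{\ell_2}\prod_i q_i^{a_i}\prod_j q_j^{b_j}\in\mathbb{Q}^2$, which by unique factorisation in $\mathbb{Z}$ forces every exponent to vanish.

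For step $(iv)$, each $q_i$ ramifies in $K/k_0$, so the principal ideal $(q_i^*)$ is a square in $\mathcal{O}_K$, while the congruence $q_i^*\equiv 1\pmod 4$ holds by construction of $q_i^*$ whenever $i\neq i_0$; the same reasoning applies to $\alpha_j^*$, using that $q_j$ splits in $k_0/\mathbb{Q}$ and ramifies in $K/k_0$ together with the defining congruence $\alpha_j^*\equiv 1\pmod 4$. The unit extension $K(\sqrt{\varepsilon_p})/K$ is unramified by \cite[p. 67]{cohn}. The most delicate generator is $\sqrt{q_{i_0}\varepsilon_p\sqrt{p}}$, whose unramifiedness is already guaranteed by its presence in the genus field $K^{(*)}$ computed in Section \ref{sec:4}.

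The main obstacle, beyond the bookkeeping of nine sub-cases, is to correctly predict which combination of $\sqrt{\varepsilon_p}$ and $\sqrt{q_{i_0}\varepsilon_p\sqrt{p}}$ has to be adjoined to the $q_i^*$ and $\alpha_j^*$ in order to attain the $2$-rank prescribed by Lemma \ref{lemma 4.1}. This dichotomy is governed by the congruence class of $a$ modulo $4$ together with the signs of the $\alpha_j$ modulo $4$, and it differs from the $p\equiv 1\pmod 8$ case of Theorem \ref{first thm} because the exponent $\lambda=3$ changes both the norm computations of the $\alpha_j$ and the structure of the genus field $K^{(*)}$. Once this dichotomy is recorded sub-case by sub-case and the rank count matches the cardinality of $\eta$, all nine verifications reduce to routine variants of the arguments already carried out in Theorem \ref{first thm}.
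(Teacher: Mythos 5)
Your proposal follows exactly the strategy the paper uses for this theorem (and, in the paper, merely sketches by deferring to Theorem \ref{first thm}): compute the $2$-rank of $\Delta(K)/{K^*}^2$ from Lemma \ref{lemma 4.1} and the Azizi--Tamimi--Zekhnini rank formulas, prove multiplicative independence of the candidate generators by pushing a hypothetical square down to $k_0$ and taking norms to $\mathbb{Q}$, and check unramifiedness via Proposition \ref{unramifed quad ex} together with the genus-field computations of Section \ref{sec:4}. Two minor corrections that do not affect the argument: for $p\equiv 5\pmod 8$ with $a$ odd the relevant rank formulas are those of \cite{aziziTmmZekhI} rather than \cite{aziziTmmZekhII}, and $N_{k_0/\mathbb{Q}}(q_{i_0}\varepsilon_p\sqrt{p})=+p\,q_{i_0}^2$ since $N(\varepsilon_p)=-1$ and $N(\sqrt{p})=-p$ both contribute a sign, though either sign suffices to rule out a rational square.
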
	
	\begin{proof}
		Inspired by the proof of Theorem \ref{first thm},   the  patient reader can   prove this theorem by using Lemma \ref{lemma 4.1} and \cite{aziziTmmZekhI}. 
	\end{proof}

	\begin{theorem} 	Suppose that $a$ is even and $p\equiv 5\pmod 8$.
		\begin{enumerate}[\rm 1.] 
			\item Assume that $\forall i \in \{1,...,n\}$  $q_i\equiv 1\pmod 4$, we have:
			\begin{enumerate}[\rm $\bullet$]
				\item If $m=0$, then
				$$E(K)=K(\sqrt{q_1},..., \sqrt{q_n})$$
				\item If  $m\geq1$ and $\forall j \in\{1,...,m\}$,  
				$\left(\dfrac{p}{q_j}\right)_4=\left(\dfrac{q_j}{p}\right)_4$, then 
				$$E(K)=K(\sqrt{q_1},..., \sqrt{q_n}, \sqrt{\alpha_1},...,\sqrt{\alpha_m}). $$

				\item If  $m\geq1$ and $\exists j\{1,...,m\}$, $\left(\dfrac{p}{q_j}\right)_4\not=\left(\dfrac{q_j}{p}\right)_4$, then 
				$$E(K)=K(\sqrt{q_1},..., \sqrt{q_n},   \{\sqrt{\alpha_j^*} \}_{j\not= j_0},\sqrt{\varepsilon_p}  ) $$
				
			\end{enumerate}

			\item Assume that $\exists i\in \{1,...,n\}$  $q_i\equiv 3\pmod 4$ and $a\equiv 1\pmod 4$, we have:
			\begin{enumerate}[\rm $\bullet$]
				\item If $m=0$, then
				$$E(K)=K( \{\sqrt{q_i^*}\}_{i\not= i_0}, \sqrt{{ q_{i_0}{\varepsilon}}_p\sqrt{p}}  ).$$
				\item  If  $m\geq1$ and $\forall j  \in\{1,...,m\}$ $q_j\equiv 1\pmod 4$, we have 
				\begin{enumerate}[\rm $\star$]
					\item  If   $\forall j\in \{1,...,m\}$  
					$\alpha_j\equiv1\pmod 4$, then  
					$$E(K)=K(\{\sqrt{ q_i^*} \}_{i\not= i_0} ,    \sqrt{\alpha_1},..., \sqrt{\alpha_m},\sqrt{q_{i_0}\varepsilon_p\sqrt{p}}   ) $$ 
					\item   If  $\exists j\in \{1,...,m\}$  
					$\alpha_j\equiv-1\pmod 4$, then  
					$$E(K)=K(\{\sqrt{ q_i^*} \}_{i\not= i_0} ,     \{\sqrt{ \alpha_j^*} \}_{j\not= j_0}, \sqrt{\varepsilon_p},\sqrt{q_{i_0}\varepsilon_p\sqrt{p}}) $$ 
				\end{enumerate}
				\item   If  $m\geq1$ and $\exists j  \in\{1,...,m\}$ $q_j\equiv 3\pmod 4$, we have 
				\begin{enumerate}[\rm $\star$]
					\item  If   $\forall j\in \{1,...,m\}$  
					$\alpha_j\equiv1\pmod 4$, then  
					
					$$E(K)= 
					K(\{\sqrt{ q_i^*} \}_{i\not= i_0} ,   \sqrt{\alpha_1},...,\sqrt{\alpha_{m-1}},\sqrt{q_{i_0}\varepsilon_p\sqrt{p}}  )  $$
					\item  If   $\exists j\in \{1,...,m\}$  
					$\alpha_j\equiv-1\pmod 4$, then 
					$$E(K)= 
					K(\{\sqrt{ q_i^*} \}_{i\not= i_0} ,    \{\sqrt{ \alpha_j^*} \}_{j\not= j_0},\sqrt{q_{i_0}\varepsilon_p\sqrt{p}})  $$
				\end{enumerate}
			\end{enumerate}

			\item Assume that $\exists i\in\{1,...,n\}$  $q_i\equiv 3\pmod 4$ and $a\equiv 3\pmod 4$, we have:
			\begin{enumerate}[\rm $\bullet$] 
				\item If $m=0$, then
				$$E(K)=K( \{\sqrt{q_i^*}\}_{i\not= i_0}, \sqrt{{ q_{i_0}{\varepsilon}}_p\sqrt{p}}  ).$$
				\item If  $m\geq1$ and $\forall j  \in\{1,...,m\}$ $q_j\equiv 1\pmod 4$, we have 	
				\begin{enumerate}[\rm $\star$]
					\item  If $\forall j\in \{1,...,m\}$  
					$\alpha_j\equiv1\pmod 4$, then  
					$$E(K)=K(\{\sqrt{ q_i^*} \}_{i\not= i_0} ,    \sqrt{\alpha_1},..., \sqrt{\alpha_m}, \sqrt{{ q_{i_0}{\varepsilon}}_p\sqrt{p}}  ). $$ 
					\item    If   $\exists j\in \{1,...,m\}$  
					$\alpha_j\equiv-1\pmod 4$, then  
					$$E(K)=K(\{\sqrt{ q_i^*} \}_{i\not= i_0} ,     \{\sqrt{ \alpha_j^*} \}_{j\not= j_0}, \sqrt{\varepsilon_p}, \sqrt{{ q_{i_0}{\varepsilon}}_p\sqrt{p}} ) $$ 
				\end{enumerate}
				\item If  $m\geq1$ and $\exists j  \in\{1,...,m\}$ $q_j\equiv 3\pmod 4$, we have 
				\begin{enumerate}[\rm $\star$]
					\item    If  $\forall j\in \{1,...,m\}$  
					$\alpha_j\equiv1\pmod 4$, then  
					$$E(K)=K(\{\sqrt{ q_i^*} \}_{i\not= i_0} ,    \sqrt{\alpha_1},..., \sqrt{\alpha_{m-1}}, \sqrt{{ q_{i_0}{\varepsilon}}_p\sqrt{p}}  ) $$ 
					\item   If   $\exists j\in \{1,...,m\}$  
					$\alpha_j\equiv-1\pmod 4$, then  
					$$E(K)=K(\{\sqrt{ q_i^*} \}_{i\not= i_0} ,     \{\sqrt{ \alpha_j^*} \}_{j\not= j_0}, \sqrt{{ q_{i_0}{\varepsilon}}_p\sqrt{p}} ) $$ 
				\end{enumerate}
			\end{enumerate}
		\end{enumerate}
		where $\alpha_j^*=\alpha_j$ if $\alpha_j \equiv  1 \pmod 4$  and $\alpha_j^*=\alpha_{j_0}\alpha_j$. 
	\end{theorem}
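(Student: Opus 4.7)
The plan is to follow the same template used in the proof of Theorem~\ref{first thm}, adapting to the parity of $a$ and the congruence class $p\equiv 5\pmod 8$. In each of the sub-cases of the statement, I would proceed in four steps: (i) compute the $2$-rank $r_2(\Delta(K)/{K^*}^2)$; (ii) write down an explicit candidate set $\eta$ of radicands whose cardinality matches this rank; (iii) verify that $\eta$ is linearly independent modulo ${K^*}^2$; (iv) verify that each adjunction $K(\sqrt{\eta_i})/K$ is unramified at all finite primes. Since $\dim_{\mathbb{F}_2}\Delta(K)/{K^*}^2 = r_2(\mathbf{C}l(K))$, the rank comparison together with independence and unramifiedness forces $K(\sqrt{\eta})=E(K)$.

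For step (i), I would combine Lemma~\ref{lemma 4.1} (which expresses $r_2(\mathbf{C}l(K))$ in terms of the number $t$ of primes ramifying in $K/k_0$ and the unit-norm index $e$) with the $2$-rank computations of Azizi--Taous--Zekhnini for $p\equiv 5\pmod 8$ (the paper \cite{aziziTmmZekhI} referenced in the preceding theorem). The two key inputs governing $e$ in this regime are: $p\equiv 5\pmod 8$ implies $N_{k_0/\mathbb{Q}}(\varepsilon_p)=-1$ (so $\varepsilon_p$ is a local non-norm at the archimedean places in a controlled way), and the elements $\sqrt{q_{i_0}\varepsilon_p\sqrt p}$ carry the ramification contribution of $2$ when $a$ is even. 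The resulting rank is $n+m-\varepsilon$ with a small correction $\varepsilon$ that depends on whether the conditions $q_j\equiv 1\pmod 4$ and $\alpha_j\equiv\pm 1\pmod 4$ hold; these are precisely the dichotomies appearing in the statement.

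For step (ii) I would choose $\eta$ by combining the generators of the genus field $K^{(*)}$ already determined in Section~\ref{sec:4} for this case with the ``Kummer extra'' elements $\sqrt{\alpha_j}$ (resp.\ $\sqrt{\alpha_j^*}$), and with the unit-type generators $\sqrt{\varepsilon_p}$ and $\sqrt{q_{i_0}\varepsilon_p\sqrt p}$ that must be thrown in whenever their associated local congruence condition is satisfied. For step (iii), I would imitate the norm argument of Theorem~\ref{first thm}: assuming $\beta=\prod{q_i^*}^{a_i}\prod{\alpha_j^*}^{b_j}\varepsilon_p^{c}(q_{i_0}\varepsilon_p\sqrt p)^d$ is a square in $K$, separate the cases $\beta\in k_0^2$ and $\delta\beta\in k_0^2$ and apply $N_{k_0/\mathbb{Q}}$; since $N(\alpha_j)=q_j z_j^2$, $N(\varepsilon_p)=-1$ and $N(q_{i_0}\varepsilon_p\sqrt p)=-pq_{i_0}^2$, this forces a square relation in $\mathbb{Q}$ among distinct primes, which is impossible. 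For step (iv), the unramifiedness of each $K(\sqrt{q_i^*})/K$ and $K(\sqrt{\alpha_j^*})/K$ follows from Proposition~\ref{unramifed quad ex} once the congruences $\alpha_j^*\equiv\square\pmod 4$ are read off via Proposition~\ref{lm symb buqia}; the unit extension $K(\sqrt{\varepsilon_p})/K$ is unramified by the standard fact cited from \cite[p.~67]{cohn}.

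The main obstacle will be the $2$-adic analysis, specifically the unramifiedness of $K(\sqrt{q_{i_0}\varepsilon_p\sqrt p})/K$ and the correct description of its interaction with $\sqrt{\varepsilon_p}$ when $a\equiv 3\pmod 4$. Because $p\equiv 5\pmod 8$, one has $\varepsilon_p\equiv \tfrac{1+\sqrt p}{2}$ up to conjugation with a non-trivial $2$-adic expansion, so the condition ``$\alpha_j\equiv\pm 1\pmod 4$'' (translated via Proposition~\ref{lm symb buqia} into biquadratic residue symbols) has to be combined with the $2$-valuation of $a$ to verify hypothesis~(2) of Proposition~\ref{unramifed quad ex}. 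Once this local verification is handled case by case and matched with the $2$-rank from step (i), the three main cases and their sub-cases follow uniformly, completing the proof.
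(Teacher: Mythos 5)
Your proposal follows exactly the strategy the paper itself prescribes for this theorem: the published proof is only the one-line remark that the result follows as in Theorem~\ref{first thm} using Lemma~\ref{lemma 4.1} and the $2$-rank results of Azizi--Tamimi--Zekhnini for $p\equiv 5\pmod 8$, and your four-step outline (rank computation, candidate generators taken from $K^{(*)}$ together with the $\alpha_j$'s and unit-type elements, norm-based linear independence, and local unramifiedness via Propositions~\ref{lm symb buqia} and~\ref{unramifed quad ex}) is precisely that template made explicit. The only quibble is the sign in $N_{k_0/\mathbb{Q}}(q_{i_0}\varepsilon_p\sqrt{p})=q_{i_0}^2\cdot(-1)\cdot(-p)=pq_{i_0}^2$, which does not affect your independence argument.
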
	
	\begin{proof}
		Inspired by the proof of Theorem \ref{first thm},   the  patient reader can prove this theorem by using Lemma \ref{lemma 4.1} and \cite{aziziTmmZekhI}. 
	\end{proof}
	
	\subsection{The case  $p=2$:}

	\begin{theorem} Suppose that $a$ and $p=2$. For   $j$ such that   $1\leq j \leq m$, let   $x_j$ and $y_j$ be the  two   positive integers such that $q_j=x_j^2-2y_j^2$.  Put $\alpha_j = x_j+y_j\sqrt{2} $, then  
		\begin{enumerate}[\rm 1.]
			\item Assume that $\forall i \in \{1,...,n\}$  $q_i\equiv 1\pmod 4$, we have:
			\begin{enumerate}[\rm $\bullet$]
				\item If $m=0$, then
				$$E(K)=K(\sqrt{q_1},..., \sqrt{q_n}).$$  
				\item If  $m\geq1$ and $\forall j \in\{1,...,m\}$  
				$\left(\dfrac{2}{q_j}\right)_4=\left(\dfrac{q_j}{2}\right)_4$, then 
				$$E(K)=K(\sqrt{q_1},..., \sqrt{q_n}, \sqrt{\alpha_1},...,\sqrt{\alpha_m}). $$

				\item If  $m\geq1$ and $\exists j\{1,...,m\}$, $\left(\dfrac{2}{q_j}\right)_4\not=\left(\dfrac{q_j}{2}\right)_4$, then 
				$$E(K)=K(\sqrt{q_1},..., \sqrt{q_n},   \{\sqrt{\alpha_j^*} \}_{j\not= j_0}  ) .$$
				
			\end{enumerate}
			
			where $\alpha_j^*=\alpha_j$ if $\alpha_j \equiv  1 \pmod 4$ $($i.e. $\left(\dfrac{p}{q_j}\right)_4=\left(\dfrac{q_j}{p}\right)_4$$)$ and $\alpha_j^*=\alpha_{j_0}\alpha_j$ else.
			
			\item Assume that $\exists i\in \{1,...,n\}$  $q_i\equiv 3\pmod 4$, we have:
			\begin{enumerate}[\rm $\bullet$]
				\item If $m=0$, then
				$$E(K)=K(\{\sqrt{ q_i^*} \}_{i\not= i_0}).$$
				\item  If  $m\geq1$ and $\forall j  \in\{1,...,m\}$ $q_j\equiv 1\pmod 4$, we have 
				\begin{enumerate}[\rm $\star$]
					\item  If   $\forall j\in \{1,...,m\}$  
					$\left(\dfrac{2}{q_j}\right)_4=\left(\dfrac{q_j}{2}\right)_4$, then  
					$$E(K)=K(\{\sqrt{ q_i^*} \}_{i\not= i_0} ,    \sqrt{\alpha_1},..., \sqrt{\alpha_m}). $$ 
					\item   If   $\exists j\in \{1,...,m\}$  
					$\left(\dfrac{2}{q_j}\right)_4\not=\left(\dfrac{q_j}{2}\right)_4$, then  
					$$E(K)=K(\{\sqrt{ q_i^*} \}_{i\not= i_0} ,     \sqrt{\beta_1},..., \sqrt{\beta_m})  ) .$$ 
				\end{enumerate}
				\item   If  $m\geq1$ and $\exists j  \in\{1,...,m\}$ $q_j\equiv 3\pmod 4$, we have 
				$$E(K)=K(\{\sqrt{ q_i^*} \}_{i\not= i_0} ,     \sqrt{\beta_1},..., \sqrt{\beta_{m-1}})  ). $$ 
			\end{enumerate}
		\end{enumerate}
		
		Where $\beta_j$ is defined as follows:
		\begin{enumerate}[\indent\rm $\star$]
			\item  If $q_j\equiv 1\pmod 8$, we have 
			$$\beta_j	=\begin{cases}\alpha_j,& \text{ if } \left(\dfrac{2}{q_j}\right)_4=\left(\dfrac{q_j}{2}\right)_4;\\
				q_{i_0}\alpha_j, & \text{ if }\left(\dfrac{2}{q_j}\right)_4\not=\left(\dfrac{q_j}{2}\right)_4.\end{cases}$$
			
			\item If $q_j\equiv 7\pmod 8$, we have 
			$$\beta_j	=\begin{cases}\alpha_j,& \text{ if }  \alpha_j \equiv 1 \text{ or } (1+\sqrt{2})^2\pmod 4;\\
				q_{i_0}\alpha_j, & \text{ if } \alpha_j \equiv -1 \text{ or }-(1+\sqrt{2})^2\pmod 4.\end{cases}$$       
		\end{enumerate}
	\end{theorem}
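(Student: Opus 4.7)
The plan is to follow the template laid out in the proof of Theorem \ref{first thm}, adapted to the base field $k_0=\mathbb{Q}(\sqrt{2})$. In each of the sub-cases, the strategy has four steps: (i) compute the $2$-rank of $\Delta(K)/(K^*)^2$ from Lemma \ref{lemma 4.1} together with the relevant ambiguous class number computation from the \cite{aziziTmmZekhII} style references (for $p=2$ one invokes the appropriate companion result); (ii) exhibit a candidate set $\eta$ of elements of $k_0^*$ of cardinality equal to that rank; (iii) show that $\eta$ is multiplicatively independent modulo $(K^*)^2$ by descending to $\mathbb{Q}$ via the norm $N=N_{k_0/\mathbb{Q}}$; and (iv) show that each element of $\eta$ generates a quadratic extension of $K$ unramified at all finite primes, using Proposition \ref{unramifed quad ex}. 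Equality of the rank and the number of independent unramified generators then forces $E(K)=K(\sqrt{\eta})$.

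For step (iii) the key inputs are $N(q_i)=q_i^2$ (with $q_i^*$ built to kill the sign issue in exactly the same way as in Theorem \ref{first thm}) and $N(\alpha_j)=x_j^2-2y_j^2=q_j$. Thus any relation $\beta=\prod q_i^{*\,a_i}\prod \alpha_j^{*\,b_j}\in (K^*)^2$ forces either $\beta$ or $\delta\beta$ to be a square in $k_0$; taking norms collapses the first option to the statement that $\prod q_i^{a_i}\prod q_j^{b_j}$ is a rational square (impossible unless all exponents vanish), and the second option to the same statement multiplied by $-2$, again impossible. This is the cleanest step and is essentially formal.

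For step (iv) the only issue beyond what Proposition \ref{unramifed quad ex} gives immediately (the ideal $(q_i)$ or $(\alpha_j)$ being a square of an ideal of $K$, which follows from the ramification of the $q_i$ in $K/k_0$ and the factorization $(\alpha_j)(\overline{\alpha_j})=(q_j^{\lambda h})$) is the congruence condition modulo $4$. Here the case $p=2$ diverges from the $p\equiv 1\pmod 8$ situation: the residue ring $\mathcal{O}_{k_0}/4\mathcal{O}_{k_0}$ has the non-trivial square classes represented by $1,-1,(1+\sqrt{2})^2,-(1+\sqrt{2})^2$, which is precisely why the definition of $\beta_j$ in the statement splits between $q_j\equiv1\pmod 8$ (where the rational biquadratic symbol $\left(\frac{2}{q_j}\right)_4=\left(\frac{q_j}{2}\right)_4$ controls $\alpha_j\bmod 4$) and $q_j\equiv 7\pmod 8$ (where $\alpha_j\bmod 4$ genuinely takes one of four classes and must be recorded directly). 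This identification, via Proposition \ref{lm symb buqia} and the explicit $2$-adic computation of $\alpha_j=x_j+y_j\sqrt{2}$ using $q_j=x_j^2-2y_j^2$, is what I expect to be the main technical obstacle; it is the analogue, for $k_0=\mathbb{Q}(\sqrt 2)$, of the equivalence "$\alpha_j\equiv\pm1\pmod 4 \Leftrightarrow $ biquadratic condition" used at the odd-$p$ step, but with an extra class to track.

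Finally, to handle the cases where some $q_i\equiv 3\pmod 4$ or where the "bad" condition on $\alpha_{j_0}$ occurs, one uses the twisting trick already seen in Theorem \ref{first thm}: replacing $q_i$ (respectively $\alpha_j$) with $q_i^*=q_{i_0}q_i$ (respectively $\alpha_j^*=\alpha_{j_0}\alpha_j$) for $i\neq i_0$ (respectively $j\neq j_0$) restores the congruence modulo $4$ needed by Proposition \ref{unramifed quad ex} while preserving multiplicative independence, because the norms of the twisted elements still cannot conspire to form a rational square. Once the four steps are carried out in each bullet and sub-bullet of the theorem, the enumerated formulas for $E(K)$ follow, completing the proof.
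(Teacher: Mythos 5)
Your proposal follows essentially the same route as the paper's proof: rank count via Lemma \ref{lemma 4.1} and the cited $2$-rank results, a candidate generating set, multiplicative independence by pushing norms down to $\mathbb{Q}$, and unramifiedness via Proposition \ref{unramifed quad ex}, with the mod-$4$ congruence controlled by Proposition \ref{lm symb buqia} when $q_j\equiv 1\pmod 8$ and by direct computation of $\alpha_j=x_j+y_j\sqrt{2}$ modulo $4$ when $q_j\equiv 7\pmod 8$. You have correctly identified the one genuinely new technical point relative to Theorem \ref{first thm} (the extra square classes in $\mathcal{O}_{k_0}/4\mathcal{O}_{k_0}$ for $k_0=\mathbb{Q}(\sqrt{2})$), which is exactly the point the paper works out explicitly.
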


	\begin{proof}
		We shall prove the last case. 	Inspired by this and  the proof of Theorem \ref{first thm},   the    reader can check the rest by using Lemma \ref{lemma 4.1} and \cite{aziziTmmZekhII}. 
		
		Assume that  $m\geq1$ and $\exists j  \in\{1,...,m\}$ $q_j\equiv 3\pmod 4$.
		By	Lemma \ref{lemma 4.1} and \cite[Theorems 5.3 and 5.4]{aziziTmmZekhII}, we have $r_2(\Delta(K)/{K^*}^2)=n+m-2$. Consider the following set
		$$\eta =\{\sqrt{ q_i^*} \}_{i\not= i_0}\cup \{ \sqrt{\beta_1},..., \sqrt{\beta_{m-1}}  \}.$$
		As in the proof of Theorem \ref{first thm}, we show that  $\eta$ is linearly independent modulo   $(K^*)^2$.

		\noindent$-$ Assume that  $j$ is such that $1\leq j \leq m$ and $q_j\equiv 1\pmod 8$. If $\left(\dfrac{2}{q_j}\right)_4=\left(\dfrac{q_j}{2}\right)_4$, then by Proposition \ref{lm symb buqia},   $x_j+y_j\equiv 1\pmod 4$.
		Thus,  $\beta_j=\alpha_j\equiv 1\pmod 4$.
		If $\left(\dfrac{2}{q_j}\right)_4\not=\left(\dfrac{q_j}{2}\right)_4$, then by Proposition \ref{lm symb buqia},    $x_j+y_j\equiv -1\pmod 4$. Thus,  $\beta_j=q_{i_0}\alpha_j\equiv 1\pmod 4$.
		
		It is easy seen that    ($\beta_j$) is the square of   a fractional ideal of $K$ for $1\leq j\leq m$. Then  by Proposition \ref{unramifed quad ex}, $K(\sqrt{\beta_j})/K$ is an unramified extension.

		\noindent$-$ Assume that $j$ is such that  $1\leq j \leq m$ and $q_j\equiv 7\pmod 8$.	It is clear that $x_j$ is odd.  Let us  show that $y_j$ is odd. We have,
		$2y_j^2\equiv x^2-q  \equiv2\pmod 8$. Therefore, one can check that $y_j$ is odd. 
		
		If $x_j\equiv y_j\equiv -1\pmod 4$, then we have
		$ \beta_j=- (1+\sqrt{2})(x_j+y_j\sqrt{2})= -(x_j+2y_j)-(x_j+y_j)\sqrt{2}$. Thus, 
		$\beta_j \equiv -1-2\sqrt{2}\equiv(3-2\sqrt{2})\equiv(1-\sqrt{2})^2 \pmod 4$.  
		If $x_j\equiv y_j\equiv 1\pmod 4$, then we have
		$ \alpha_j=(x_j+2y_j)+(x_j+y_j)\sqrt{2}\equiv 3+2\sqrt{2} \equiv (1+\sqrt{2})^2 \pmod 4$. Thus, 
		$\beta_j \equiv -1-2\sqrt{2}\equiv(3-2\sqrt{2})\equiv(1-\sqrt{2})^2 \pmod 4$.
		We similarly proceed for the other cases of $x_j$ and $y_j$ and show that the equation $\beta_j\equiv \xi^2\pmod 4$ has a solution.
		Since $q_j$ ramify in $K/k_0$, $(\beta_j)$ is the square of an ideal of $K$. Therefore, by Proposition \ref{unramifed quad ex}, $K(\sqrt{\beta_j})/K$ is an unramified extension.
	\end{proof}

	\begin{remark}
		In   all  cases  throughout this section    where $E(K)=K(\{\sqrt{ q_i^*} \}_{i\not= i_0}  )$ and $a=q_{i_0}$, we have $E(K)=K$.
	\end{remark}

\end{document}